\documentclass[12pt]{article}
\usepackage[utf8]{inputenc}
\usepackage{csquotes}
\usepackage{amsmath}
\usepackage{graphicx}
\usepackage[title]{appendix}
\usepackage[colorinlistoftodos]{todonotes}
\usepackage[
    colorlinks=true,
    linkcolor=black,
    urlcolor=blue
]{hyperref}
\usepackage{eurosym}
\usepackage{mathtools}
\usepackage{amsfonts,amssymb,amsthm}
\usepackage{commath}
\usepackage{indentfirst}
\usepackage{scrextend}
\usepackage{listings}
\usepackage{float}
\usepackage{caption}
\usepackage{upgreek}
\usepackage{subfigure}

\usepackage{url}
\hypersetup{
    colorlinks = false,
    linkbordercolor = {white}
}
\usepackage{comment}
\usepackage{fancyhdr}
\usepackage{authblk}
\usepackage{multirow}
\usepackage{array}
\usepackage{booktabs}
\usepackage{color,colortbl}
\usepackage{mathtools}
\usepackage{cleveref}
\usepackage{enumitem}

\usepackage[many]{tcolorbox}    	

\definecolor{main}{HTML}{5989cf}    
\definecolor{sub}{HTML}{cde4ff}     

\tcbset{
    sharp corners,
    colback = white,
    before skip = 0.2cm,    
    after skip = 0.5cm      
}                           

\newtcolorbox{boxD}{
    colback = sub, 
    colframe = main, 
    boxrule = 0pt, 
    toprule = 3pt, 
    bottomrule = 3pt 
}

\usepackage[
  natbib=true,
  style=numeric-comp,
  sorting=none,
  giveninits=true,
  maxbibnames=99,
  doi=true,
  isbn=false,
  url=false,
  eprint=false
]{biblatex}
\AtEveryBibitem{\clearfield{month}}

\usepackage[margin=1in]{geometry}

\newtheorem{theorem}{Theorem}            
\newtheorem{lemma}[theorem]{Lemma}       
\newtheorem{proposition}[theorem]{Proposition} 
\newtheorem{remark}[theorem]{Remark}
\newtheorem{definition}[theorem]{Definition}

\begin{document}

\title{Backward bifurcations in spatial replicator models: when invasion criteria fail to predict coexistence}

\author[1]{Tomás Freire}
\author[1]{Erida Gjini}
\author[2]{Sten Madec}

\affil[1]{Center for Computational and Stochastic Mathematics, Instituto Superior Tecnico, Lisbon, Portugal}
\affil[2]{Institut Denis Poisson, University of Tours, Tours, France}
\date{
\small tomas.freire@tecnico.ulisboa.pt \quad erida.gjini@tecnico.ulisboa.pt \quad sten.madec@univ-tours.fr}

\maketitle

\tableofcontents

\newpage
\begin{abstract}
The replicator equation is a central framework for studying frequency–dependent selection in ecology and evolutionary game theory. In well-mixed populations, the long-term outcome of a two-species system is determined by the signs of the pairwise invasion fitnesses, leading to dominance, coexistence, or bistability. However, many ecological systems are spatially structured, with environmental heterogeneity and dispersal shaping local interactions. How these spatial effects modify the classical replicator regimes remains incompletely understood.

In this work, we study a spatially heterogeneous extension of the two-species replicator equation in which pairwise invasion fitnesses vary across space, and frequencies evolve under diffusion and advection. In this setting, the classical invasion fitnesses are replaced by spatial invasion rates given by the principal eigenvalues of the associated linearized operators.

Using bifurcation theory, we show that these spatial invasion rates do not fully determine the qualitative dynamics of the system. In particular, spatial heterogeneity can induce a backward bifurcation, generating stable coexistence states even in parameter regimes where the well-mixed replicator predicts competitive exclusion. We derive an explicit local condition for this mechanism, characterizing when such coexistence states arise.

These results show that spatial structure can fundamentally alter classical replicator dynamics and provide a concrete mechanism through which coexistence may emerge.
\end{abstract}

\section{Introduction}

Understanding when competition leads to coexistence and when it leads to exclusion is a central problem in theoretical ecology.
Classical models, from the Lotka-Volterra framework to replicator dynamics, often predict that coexistence becomes increasingly fragile as the number of interacting species grows. One of the most influential contributions to this topic is Robert May's 1972 paper \cite{MAY1972}, which highlighted a striking paradox: large and complex ecological systems are predicted to be unstable, despite the widespread empirical observation that natural ecosystems can persist for long periods. This apparent contradiction has motivated decades of research aimed at identifying mechanisms that promote stability and coexistence in ecological communities \cite{McCann2000,Chesson2000,Allesina2012}.

Several mechanisms have been proposed to reconcile theory with observation, including stochasticity, interaction network structure, alternative notions of stability, and spatial structure. Among these, spatial heterogeneity can be especially important \cite{Tilman1994,Levin1992}. In many ecological systems, individuals interact locally and experience heterogeneous environments, so that dispersal and environmental variation strongly influence competitive outcomes. Even simple population models can change their qualitative behavior once spatial processes are introduced. A classical example is the Kierstead-Slobodkin-Skellam model, which shows that a population subject to diffusion can only persist if the habitat patch exceeds a critical size determined by dispersal and growth rates \cite{SKELLAM1951,Kierstead1953}. Subsequent theoretical work has extended this framework to heterogeneous environments, showing that spatial variation in growth rates or dispersal can shift persistence thresholds through its effect on the principal eigenvalue of the associated growth-diffusion operator \cite{Cantrell1991,Dockery1998,CANTRELL2001}.

More broadly, spatial dynamics have been studied in detail across many areas of ecology, evolution, and microbiology. In reaction-diffusion and reaction-advection-diffusion models, movement through heterogeneous environments can change persistence thresholds, alter competitive outcomes, and generate coexistence mechanisms that are absent from the corresponding well-mixed systems \cite{Cosner2014}. In spatial Lotka-Volterra and competition models, heterogeneity and dispersal can modify the stability of coexistence states, change the outcome of competition, and select for different movement strategies \cite{Dockery1998,CantrellCosnerLou2010}. In ecological and metacommunity models, spatial heterogeneity and dispersal structure can promote diversity by creating refuges, source-sink effects, or local differences in competitive conditions \cite{Amarasekare2001,Mouquet2003,Leibold2004,Snyder2004}. Similar ideas have become increasingly important in microbial ecology, where spatial structure, short-range interactions, nutrient gradients, and dispersal can strongly shape community assembly and evolution \cite{Custer2022,Bcker2026}. For example, \cite{Ghosh2022} studied spatial models of luminal growth in the human gut microbiota and showed that spatially structured growth and flow can generate effective evolutionary forces that are not apparent in a well-mixed description. These examples illustrate that spatial structure is not only a technical extension of classical models, but can fundamentally change the ecological and evolutionary forces experienced by competing types.

Spatial extensions of evolutionary game dynamics and replicator equations have also been studied in several settings. Early work on spatial evolutionary games showed that local interactions and spatial structure can generate pattern formation, spatial chaos, and changes in evolutionary stability \cite{Vickers1989,Nowak1992,Durrett1994,Nanda2017}. Replicator-diffusion and finite-population spatial replicator models have also been used to study travelling waves and pattern formation in evolutionary games \cite{Vickers1989,Griffin2021}. Related spatial replicator-type equations also arise in epidemiology, where structured coinfection models can lead, under suitable assumptions, to equations for strain frequencies \cite{Le2023}. Recent public goods game models further show how environmental feedbacks, variable population size, and diffusion can shape cooperation and generate complex spatio-temporal dynamics \cite{chengetal2024,wang2026bifurcations}.

Despite these connections, spatially heterogeneous replicator equations are relatively uncommon. One reason is that the passage from abundance-based spatial models to frequency-based equations is not straightforward. In a non-spatial generalized Lotka-Volterra system, the dynamics can often be rewritten in terms of relative frequencies, leading naturally to a replicator equation with payoff matrix \(\Lambda\). In space, however, the total population density generally varies with location. As a result, transforming species abundances \(n_i(x,t)\) into local frequencies \(z_i(x,t)=n_i(x,t)/N(x,t)\), with \(N=\sum_i n_i\), does not in general produce a replicator system with migration. As a consequence, early spatial replicator formulations often retained information about both species abundances \(n_i\) and relative frequencies \(z_i\), rather than closing the model at the frequency level alone \cite{Vickers1989,Vickers1991}. These restrictions can be relaxed by coupling the frequency dynamics to a separate equation for total biomass or population density, following \cite{Durrett1994} and later extensions \cite{Griffin2021,Griffin2024}.

A related conceptual issue concerns the interpretation of payoffs in space. In the classical replicator equation, the quadratic term \(z^T\Lambda z\) represents the average payoff in the population. A spatial version with a local term \(z(x)^T\Lambda(x)z(x)\) assumes that the payoff experienced at position \(x\) depends on the frequency composition at that same position. This is natural when interactions are local, but less immediate when payoffs are mediated by interactions over larger spatial scales. Alternative formulations therefore define payoffs through spatial integrals, with dispersal entering either inside or outside the payoff structure \cite{Bratus2014}.

A concrete setting in which spatial replicator-type equations arise is provided by structured multi-strain epidemiological SIS models with co-colonization \citep{Madec2020,le2023quasi,Le2023}. In these models, suitable reductions based on strain similarity can lead to equations for the local frequencies of strains, with interaction terms and movement acting directly on those frequencies. This gives one biological motivation for the class of spatial replicator equations considered here.

At the same time, the spatial replicator equation can also be studied as a minimal model for frequency-dependent competition in heterogeneous environments, building on the classical replicator framework \cite{PAGE2002,Bomze1995,Hofbauer1998}. Its main advantage in the present setting is dimensional reduction. An \(N\)-species system can be written in terms of \(N-1\) independent frequencies. This reduction is especially useful in the \(N=2\) species case, where the dynamics are described by a single scalar equation for the frequency of one species. The resulting scalar model provides a useful baseline for studying how spatial heterogeneity and dispersal modify classical coexistence criteria and invasion-based predictions, while avoiding the additional complexity of a coupled system of PDEs.

Starting from the scalar replicator equation for the frequency of species~1, we allow the pairwise interaction terms $\lambda_i^j$ to depend on location and add movement through diffusion and advection. This leads to the spatial replicator equation
\begin{equation*}
\frac{\partial z}{\partial t}=
z(1-z)\bigl(\lambda_1^2(x)-(\lambda_1^2(x)+\lambda_2^1(x))z\bigr)
+
\vec{\nu}(x)\cdot\nabla z
+
\nabla\cdot\bigl(D(x)\nabla z\bigr),
\end{equation*}
where $z=z(x,t)$ denotes the local frequency of species~1. 

Although the resulting model remains minimal, it captures key mechanisms through which spatial heterogeneity can influence frequency-dependent competition. Our goal is to isolate the role of spatial structure and determine how it modifies the classical two-species replicator regimes. We show that spatial heterogeneity can generate a backward bifurcation from a boundary equilibrium, producing stable coexistence states in parameter regimes where the corresponding homogeneous model predicts exclusion. In particular, coexistence may occur even when one of the spatial invasion fitnesses is negative and the other positive. The bifurcation analysis identifies an explicit condition for this mechanism, expressed through spatial averages weighted by the principal eigenfunctions of the linearized operator and its adjoint. This provides a concrete link between environmental heterogeneity, dispersal, and the limits of sign-based invasion criteria.

The remainder of the paper is organized as follows. Section~2 recalls the classical replicator dynamics and introduces the spatially heterogeneous reaction-advection-diffusion formulation studied in this work. We then define spatial invasion fitness through the principal eigenvalues of the linearized operators. Section~3 analyzes the stability regimes in the $(\rho_1^2,\rho_2^1)$ plane. Section~4 develops the bifurcation analysis leading to the backward bifurcation mechanism. Section~5 presents biologically relevant examples illustrating these effects, and Section~6 concludes with a discussion of the ecological implications of the results.

\section{Replicator dynamics}
\subsection{Non-spatial replicator dynamics}

To establish a baseline for comparison, we briefly recall the classical replicator dynamics describing frequency-dependent competition in well-mixed populations. These equations have been widely studied due to their broad applicability in evolutionary game theory, ecology, epidemiology, and microbiology \cite{Hofbauer1998,Cressman2014}. The instance of the replicator equation that we consider is written in terms of an invasion fitness matrix $\Lambda=(\lambda_i^j)_{1\leq i,j \leq N}$, where each entry $\lambda_i^j$ represents the pairwise invasion fitness of species $i$ in an environment set by species $j$ \cite{Geritz1998}; that is, $\lambda_i^j$ is the initial growth rate of species $i$ when it is rare in an environment set by $j$. This framework tracks the evolution of the relative frequency of each species and is described by the following $N$-dimensional system of equations:

\begin{equation}
\label{eq:replicatorODE}
\frac{d z_i}{dt}
=
z_i \left(\sum_{j \neq i} \lambda_i^j z_j
-
\sum_{1\leq k<j \leq N} ( \lambda_j^k+\lambda_k^j) z_j z_k \right),
\quad i=1,\dots,N.
\end{equation}

In this model, the linear term $\sum_{j \neq i} \lambda_i^j z_j$ can be interpreted as the context-dependent invasion fitness of species $i$, while the quadratic term represents the resistance of the system to invasion.

In the two-species case ($N=2$), the system reduces to a single equation for the frequency of species~1, denoted $z=z_1$, with $z_2=1-z$:

\begin{equation}
\label{eq:replicator2species}
\frac{d z}{dt}
=
z(1-z)\bigl(\lambda_1^2-(\lambda_1^2+\lambda_2^1)z\bigr).
\end{equation}

Equation~\eqref{eq:replicator2species} has up to three non-negative equilibria:
\[
z_0^*=0,\qquad
z_1^*=1,\qquad
z_{eq}^*=\frac{\lambda_1^2}{\lambda_1^2+\lambda_2^1}.
\]
Their stability depends on the signs of the invasion fitness $\lambda_i^j$, leading to the classical four regimes: dominance of species~1, dominance of species~2, coexistence, or bistability \cite{Hofbauer1998}:

\begin{itemize}
\item if $\lambda_1^2>0$ and $\lambda_2^1>0$, the interior equilibrium $z_{eq}^*$ is stable and both species coexist;
\item if $\lambda_1^2$ and $\lambda_2^1$ have opposite signs, competitive exclusion occurs, with one boundary equilibrium stable and the other unstable;
\item if $\lambda_1^2<0$ and $\lambda_2^1<0$, the system is bistable: both boundary equilibria $z_0^*, z_1^*$ are locally stable, while the interior equilibrium $z_{eq}^*$ is unstable and separates the basins of attraction.
\end{itemize}

\subsection{Spatial replicator dynamics with heterogeneity}

The formulation above assumes a well-mixed population and therefore neglects
spatial heterogeneity. In many ecological and biological systems, however, this
assumption is unrealistic: individuals interact locally, dispersal processes
shape encounter rates, and environmental variation modifies local fitness
landscapes.

Motivated by this, we extend the classical two-species replicator equation by
allowing the interaction coefficients $\lambda_1^2(x)$ and $\lambda_2^1(x)$ to
depend explicitly on spatial location $x\in\Omega\subset\mathbb{R}^n$. We also
incorporate diffusion, representing random dispersal or local movement, and
advection, representing directed transport induced, for example, by flow or
environmental drift. The resulting reaction-advection-diffusion equation for
the frequency of species~1 is
\begin{equation}
\begin{cases}
\displaystyle
\frac{\partial z}{\partial t}
=
z(1-z)\bigl(\lambda_1^2(x)-(\lambda_1^2(x)+\lambda_2^1(x))z\bigr)
+
\vec{\nu}(x)\cdot\nabla z
+
\nabla\cdot\!\bigl(D(x)\nabla z\bigr),
\\[0.2cm]
\displaystyle
\frac{\partial z}{\partial \vec n}=0,
\quad x\in\partial\Omega,
\\[0.1cm]
z(x,0)=z_0(x)\in L^2(\Omega).
\end{cases}
\label{eq:repinspace}
\end{equation}
Here $\Omega\subset\mathbb{R}^n$ is the spatial domain, $D(x)$ is the diffusion coefficient, and $\vec{\nu}(x)$ is the advection field. The no-flux boundary condition means that individuals do not leave the domain through the boundary.

We assume the following conditions:

\begin{enumerate}[label=\textbf{(H\arabic*)}]
\item \label{H:domain}
$\Omega\subset\mathbb{R}^n$ is bounded with $C^{1,1}$ boundary.

\item \label{H:d}
$D(\cdot)\in C^{0,1}(\overline\Omega)$;

\item \label{H:nu}
$\vec{\nu}\in W^{1,\infty}(\Omega;\mathbb{R}^n)$;

\item \label{H:lambda}
$\lambda_1^2,\lambda_2^1\in W^{1,\infty}(\Omega)$; 

\item \label{H:lambda2}
\( \lambda_1^2 \geq 0\) a.e. in $\Omega$ and $\lambda_1^2(x)>0$ on a nonempty open subset of $\Omega$.

\end{enumerate}

\subsubsection{Links with the homogeneous model}

When the interaction coefficients are constant in space, the spatial model recovers the classical replicator behavior, except in the bistable regime.

\begin{proposition}[Spatially-homogeneous interaction coefficients]
\label{prop:constant_lambda_main}
Assume $\lambda_1^2(x)\equiv\lambda_1^2$ and $\lambda_2^1(x)\equiv\lambda_2^1$ are constants. Let $z$ be a solution for \eqref{eq:repinspace} with $0\le z_0\le1$ and $z_0\not\equiv0,1$. Then:

\begin{enumerate}[label=(\roman*)]
\item if $\lambda_1^2>0$ and $\lambda_2^1<0$, species~1 excludes species~2, that is, $z(\cdot,t)\to1$;
\item if $\lambda_1^2<0$ and $\lambda_2^1>0$, species~2 excludes species~1, that is, $z(\cdot,t)\to0$;
\item if $\lambda_1^2>0$ and $\lambda_2^1>0$, the system admits the stable spatially homogeneous coexistence equilibrium
\[
z_{eq}^*=\frac{\lambda_1^2}{\lambda_1^2+\lambda_2^1}\in(0,1).
\]

\item If $\lambda_1^2<0$ and $\lambda_2^1<0$, the system is multistable and the outcome depends on the initial condition; in this regime spatial structure may influence the equilibrium.
\end{enumerate}

\end{proposition}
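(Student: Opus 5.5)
The plan is to use the parabolic comparison principle together with spatially homogeneous sub- and supersolutions. With constant coefficients the reaction term $f(z)=z(1-z)\bigl(\lambda_1^2-(\lambda_1^2+\lambda_2^1)z\bigr)$ does not depend on $x$. Therefore every solution $w(t)$ of the ODE \eqref{eq:replicator2species} is also a solution of \eqref{eq:repinspace}: both the advection and diffusion terms vanish on functions that are constant in space, and such functions satisfy the Neumann condition. Under \ref{H:domain}--\ref{H:nu} the operator $\vec\nu\cdot\nabla+\nabla\cdot(D\nabla\cdot)$ is uniformly elliptic with Lipschitz coefficients, and $f$ is locally Lipschitz, so the standard comparison principle for semilinear parabolic problems with no-flux boundary conditions applies. (We take $D$ bounded below by a positive constant, as is implicit.) Since $0$ and $1$ are equilibria, comparison gives $0\le z\le 1$ for all time. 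The strong maximum principle then gives $0<z(\cdot,t)<1$ on $\overline\Omega$ for every $t>0$, because $z_0\not\equiv 0,1$.

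For (i), fix $t_0>0$ and set $m=\min_{\overline\Omega}z(\cdot,t_0)>0$. This minimum exists and is positive by continuity of the solution for $t>0$, which follows from parabolic regularity. Let $\underline w(t)$ be the ODE solution with $\underline w(t_0)=m$. Since $\lambda_1^2>0$ and $\lambda_2^1<0$, we have $f>0$ on $(0,1)$, because the last factor $\lambda_1^2-(\lambda_1^2+\lambda_2^1)z$ is positive at both $z=0$ and $z=1$ and is linear in $z$. Hence $\underline w(t)\to1$. Comparison gives $\underline w(t)\le z(\cdot,t)\le 1$, so $z\to1$ uniformly. Case (ii) is symmetric: take $\overline w(t_0)=\max z(\cdot,t_0)<1$, note that $f<0$ on $(0,1)$, and obtain $z\to0$.

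For (iii), $f>0$ on $(0,z_{eq}^*)$ and $f<0$ on $(z_{eq}^*,1)$. We sandwich $z$ between the ODE solutions started from $\min z(\cdot,t_0)$ and $\max z(\cdot,t_0)$; both lie in $(0,1)$ and hence converge to $z_{eq}^*$. This shows that the constant $z_{eq}^*$ is globally attracting for initial data as in the statement, which in particular yields stability. Alternatively, stability follows from the linearization $\partial_t\phi=f'(z_{eq}^*)\phi+\vec\nu\cdot\nabla\phi+\nabla\cdot(D\nabla\phi)$. Here $f'(z_{eq}^*)=-z_{eq}^*(1-z_{eq}^*)(\lambda_1^2+\lambda_2^1)<0$, and the principal eigenvalue of the elliptic part under Neumann conditions is $0$, with constant eigenfunction. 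So the spectrum of the linearization is shifted strictly into the left half-plane.

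For (iv), $z_0^*=0$ and $z_1^*=1$ are both linearly stable, by the same spectral shift with $f'(0)=\lambda_1^2<0$ and $f'(1)=\lambda_2^1<0$. Small perturbations of either constant therefore return to it, and sandwiching between ODE solutions shows the outcome depends on the initial condition: data with $\max z_0<z_{eq}^*$ go to $0$, and data with $\min z_0>z_{eq}^*$ go to $1$. Initial data that cross $z_{eq}^*$ are not controlled by constant sub- and supersolutions. In that case the spatial operator, and possibly nonconstant steady states on nonconvex domains, may select the limit. This is exactly the qualitative claim ``spatial structure may influence the equilibrium,'' so no further argument is required beyond exhibiting both basins.

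The main technical point is justifying the comparison principle and positivity in the presence of the advection term $\vec\nu\cdot\nabla z$ under the stated regularity. I would handle it by citing standard results for linear parabolic operators with bounded drift, for example via the weak maximum principle applied to the difference of two solutions, whose equation has a bounded zeroth-order coefficient coming from the Lipschitz continuity of $f$ on $[0,1]$.
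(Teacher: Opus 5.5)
Your proposal is correct and follows essentially the same route as the paper. Both arguments get positivity (and $z<1$) from the strong maximum principle, then use spatially constant ODE solutions as sub- and supersolutions together with the parabolic comparison principle to force $z\to 1$, $z\to 0$, or $z\to z_{eq}^*$ in cases (i)--(iii). Your linearized-stability remark and your explicit basins for case (iv) (data with $\max z_0<z_{eq}^*$ go to $0$, data with $\min z_0>z_{eq}^*$ go to $1$) go slightly beyond the paper, which leaves (iv) to ``analogous comparison arguments.''
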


\begin{proof}
We give the argument in case (i); the other cases follow by analogous comparison arguments.

\smallskip
\noindent\textbf{(i): $\lambda_1^2>0$ and $\lambda_2^1<0$.}
Let $z$ solve \eqref{eq:repinspace}. By the strong maximum principle and the Neumann boundary condition,
if $z_0\not\equiv 0$ then $z(x,t)>0$ for all $x\in\overline\Omega$ and all $t>0$.
Fix $t_0>0$. Choose $\varepsilon>0$ such that
\[
0<\varepsilon \le \min_{x\in\overline\Omega} z(x,t_0).
\]
Define the spatially constant function $\underline z(x,t)=\zeta(t)$ for $t\ge t_0$ as the solution of the ODE
\begin{equation}\label{eq:sub_ode_casei}
\frac{d\zeta}{dt}
=
\zeta(1-\zeta)\bigl(\lambda_1^2-(\lambda_1^2+\lambda_2^1)\zeta\bigr),
\qquad
\zeta(t_0)=\varepsilon.
\end{equation}
Since $\zeta$ is constant in space, $\nabla \zeta\equiv 0$ and $\nabla\cdot(D(x)\nabla \zeta)\equiv 0$,
and therefore $\zeta$ solves \eqref{eq:repinspace} with the same reaction term. In particular, $\zeta$ is a solution
of the PDE \eqref{eq:repinspace} and hence a subsolution.

The ODE \eqref{eq:sub_ode_casei} is exactly the two-species replicator equation with constant coefficients.
When $\lambda_1^2>0$ and $\lambda_2^1<0$, its unique stable equilibrium in $(0,1]$ is $\zeta=1$,
and any solution with $\zeta(t_0)>0$ satisfies $\zeta(t)\rightarrow 1$ as $t\to\infty$.

By construction, $\zeta(\cdot,t_0)\le z(\cdot,t_0)$ in $\Omega$.
The parabolic comparison principle then implies
\[
\zeta(t)\le z(x,t)\le 1
\qquad\text{for all } x\in\overline\Omega,\ t\ge t_0,
\]
and so $z(\cdot,t)\to 1$ as $t\to\infty$.

\smallskip
\noindent\textbf{(ii)}
Case (ii) follows by applying the same argument to $w=1-z$, which satisfies an equation of the same form and converges to $w\to 1$,
i.e.\ $z\to 0$. 

\noindent\textbf{(iii)}
Case (iii) is obtained by constructing spatially constant sub- and supersolutions from the scalar ODE, with initial values chosen below and above both $z(\cdot,t_0)$ and the coexistence equilibrium $z_{eq}^*$. These two ODE solutions converge to $z_{eq}^*$ from below and above, respectively, and the comparison principle then gives $z(\cdot,t)\to z_{eq}^*$.

For the supersolution, for example, choose $\delta>0$ such that
\[
z(x,t_0)\le 1-\delta
\qquad\text{for all }x\in\overline\Omega .
\]
Let $\overline z(x,t)=\xi(t)$, where $\xi$ solves
\[
\frac{d\xi}{dt}
=
\xi(1-\xi)\bigl(\lambda_1^2-(\lambda_1^2+\lambda_2^1)\xi\bigr),
\qquad
\xi(t_0)=1-\delta .
\]
Since $\overline z$ is spatially constant, the diffusion and advection terms vanish. Moreover, $\overline z$ solves  \eqref{eq:repinspace}, and is a supersolution.

Thus, since $\lambda_1^2>0$ and $\lambda_2^1>0$, we have that the supersolution will converge to
\[
z_{eq}^*=\frac{\lambda_1^2}{\lambda_1^2+\lambda_2^1}\in(0,1)
\]
based on the non-spatial replicator. A similar argument can be applied to show that the subsolution also converges to $z_{eq}^*$.
\end{proof}

Figure~\ref{fig:changeswithspace} summarizes the four regimes described in Proposition~\ref{prop:constant_lambda_main}.

\begin{remark}[Multistable case $\lambda_1^2<0$ and $\lambda_2^1<0$.]
In this regime, the well-mixed dynamics have two locally stable equilibria at $0$ and $1$ with an unstable interior threshold. It is easy to understand that there are at least two stable solutions, similar to the non-spatial case, depending on whether the initial condition is above or below the unstable threshold. However, the addition of space adds the potential for more coexistence states to arise. For example, in the absence of diffusion and advection, the equation reduces to a collection of uncoupled ODEs, one at each spatial point. This creates a continuum of stationary spatial profiles. Which profile is selected depends on the initial data $z_0(x)$, and its position  relative to the unstable threshold.
\end{remark}

\subsubsection{Global invasion fitness}

In the spatial model, the coefficients $\lambda_i^j(x)$ represent local invasion fitnesses.
A global measure of invasion is obtained from the principal eigenvalue of the linearized
operator around the boundary equilibria.

Linearizing \eqref{eq:repinspace} around $z=0$ yields

\begin{equation}
\begin{cases}
\nabla \cdot\!\bigl(D(x)\nabla \psi\bigr)
+\vec{\nu}(x)\cdot\nabla \psi
+\lambda_1^2(x)\psi
= \sigma \psi,
& x\in\Omega,
\\
\displaystyle
\frac{\partial \psi}{\partial \vec n}=0,
& x\in\partial\Omega.
\end{cases}
\label{eq:eigenproblem}
\end{equation}
Under assumptions \textbf{\ref{H:domain}-\ref{H:lambda2}}, classical results for elliptic
operators with Neumann boundary conditions ensure that the problem
\eqref{eq:eigenproblem} admits a principal eigenvalue associated with a strictly
positive eigenfunction (see, e.g., \cite{Berestycki1994,Cantrell2003}).
\begin{definition}[Spatial invasion fitness]
Let $\rho_1^2$ denote the principal eigenvalue of \eqref{eq:eigenproblem}. When $\vec{\nu}=0$, this principal eigenvalue admits the characterization
\begin{equation}
\rho_1^2
=
\sup_{\varphi\in H^1(\Omega),\ \varphi\not\equiv 0}
\frac{
\displaystyle
\int_\Omega \lambda_1^2(x)\varphi^2\,dx
-
\int_\Omega D(x)|\nabla\varphi|^2\,dx
}{
\displaystyle
\int_\Omega \varphi^2\,dx
}.
\end{equation}
When advection is present, the operator is generally non-self-adjoint. In that case, a variational characterization is impractical, but the principal eigenvalue remains well defined and retains this interpretation.
\end{definition}
Similarly, defining $w=1-z$ and linearizing around $w=0$ yields the corresponding principal eigenvalue $\rho_2^1$.

Species $i$ can invade species $j$ when rare if and only if $\rho_i^j>0$.

The quantities $\rho_1^2$ and $\rho_2^1$ therefore act as spatial invasion
fitnesses for the heterogeneous replicator system. This suggests a direct
analogy with the classical model, where the qualitative dynamics
are determined by the signs of $(\lambda_1^2,\lambda_2^1)$. One might therefore
expect the spatial dynamics to be governed entirely by the sign structure of
$(\rho_1^2,\rho_2^1)$.

Under this heuristic, the same four regimes would persist after replacing the
local invasion fitnesses by their spatial counterparts: competitive exclusion
when the signs are opposite, coexistence when both are positive, and bistability
when both are negative.

However, this intuition is incomplete. Spatial heterogeneity can fundamentally
alter the global behavior and may allow coexistence even in parameter regimes
where the homogeneous replicator predicts exclusion. In the next section we
explore how the dynamics change across the $(\rho_1^2,\rho_2^1)$ plane, before explaining this phenomenon through a backward bifurcation mechanism in
Section~\ref{sec:backwards}.

\section{Exploring the $(\rho_1^2,\rho_2^1)$ plane}

We now examine how the qualitative dynamics of the spatial system depend on
the sign structure of $(\rho_1^2,\rho_2^1)$ and compare the resulting regimes
with those of the well-mixed replicator equation.

\subsection{Quadrant I: $\rho_1^2>0,\ \rho_2^1>0$}

We begin with the case where both species can invade when rare. As in the
non-spatial replicator equation, this regime leads to persistence of both
species.

\begin{lemma}[Persistence from a positive invasion rate]
\label{lem:persistence}
Let $z(x,t)$ be a solution of \eqref{eq:repinspace} with
$0 \le z_0 \not\equiv 0$.
\begin{enumerate}
\item If $\rho_1^2>0$, then species~1 persists uniformly: there exist
$\delta>0$ and $T>0$ such that
\[
z(x,t)\ge \delta
\quad \text{for all } x\in\overline{\Omega},\ t\ge T.
\]
\item If $\rho_2^1>0$, then species~2 persists uniformly, i.e.
$1-z(x,t)\ge\delta$ for all $x$ and sufficiently large $t$.
\end{enumerate}
\end{lemma}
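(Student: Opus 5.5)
Our approach is a sub-solution argument built from the principal eigenfunction of \eqref{eq:eigenproblem}, combined with the comparison principle already used in the proof of Proposition~\ref{prop:constant_lambda_main}. The invariance $0\le z\le 1$ follows from comparison with the constant solutions $0$ and $1$. Let $\psi>0$ be the principal eigenfunction associated with $\rho_1^2$. Elliptic regularity under \ref{H:domain}--\ref{H:lambda} gives $\psi\in C^{1}(\overline\Omega)$, and the Hopf lemma gives $\min_{\overline\Omega}\psi>0$. Normalize $\max\psi=1$.

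\textbf{Step 1: the eigenfunction gives a small stationary subsolution.} Write the reaction as $f(x,z)=z\,g(x,z)$ with $g(x,z)=(1-z)\bigl(\lambda_1^2-(\lambda_1^2+\lambda_2^1)z\bigr)$. Since the coefficients are bounded, $|g(x,z)-\lambda_1^2(x)|\le Cz$ for $z\in[0,1]$. For $\underline z=\varepsilon\psi$ we have
\[
\nabla\cdot(D\nabla\underline z)+\vec\nu\cdot\nabla\underline z+f(x,\underline z)
=\varepsilon\psi\bigl(\rho_1^2-\lambda_1^2+g(x,\varepsilon\psi)\bigr)\ge\varepsilon\psi\bigl(\rho_1^2-C\varepsilon\bigr)>0
\]
whenever $\varepsilon<\rho_1^2/C$. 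Moreover $\underline z$ satisfies the Neumann condition. So $\varepsilon\psi$ is a strict stationary subsolution for every sufficiently small $\varepsilon$.

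\textbf{Step 2: place the solution above it.} As in the proof of Proposition~\ref{prop:constant_lambda_main}, the strong maximum principle and Hopf lemma give $z(\cdot,t_0)>0$ on $\overline\Omega$ for any fixed $t_0>0$ when $z_0\not\equiv0$. Continuity on the compact set then yields $m=\min_{\overline\Omega}z(\cdot,t_0)>0$. Choose $\varepsilon\le m$ smaller than the threshold of Step~1. Then $\varepsilon\psi\le z(\cdot,t_0)$, and the parabolic comparison principle gives $z(x,t)\ge\varepsilon\psi(x)\ge\varepsilon\min\psi=:\delta$ for all $t\ge t_0=:T$. This already proves part~1.

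A sharper statement is also available. The solution $u$ starting at $\varepsilon\psi$ is nondecreasing in $t$, because it starts at a subsolution. It therefore converges to a minimal positive steady state $\theta$, and $\liminf z\ge\theta$. The plain bound above is all that is needed here.

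\textbf{Step 3: species~2.} For part~2, set $w=1-z$. It solves an equation of the same form, with the roles of $\lambda_1^2$ and $\lambda_2^1$ exchanged and the same diffusion and advection, using $\nabla w=-\nabla z$. Its linearization at $w=0$ has principal eigenvalue $\rho_2^1$. Applying Steps~1--2 requires $w(\cdot,0)=1-z_0\not\equiv0$, which is implicit in the statement (if $z_0\equiv1$, species~2 is absent). This gives $1-z\ge\delta$ for large $t$.

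\textbf{Main obstacle.} The delicate point is regularity. To guarantee $\psi\in C(\overline\Omega)$ with a positive minimum, and to justify the classical comparison principle for initial data that are only in $L^2$, we need more than the bare assumptions. We handle this by working for $t\ge t_0>0$, after parabolic smoothing has made $z$ classical. We then invoke the $W^{2,p}$ theory for $\psi$ under \ref{H:domain}--\ref{H:nu}, followed by Sobolev embedding and the Hopf lemma. One further check is needed: the sign of the advection term in the eigenproblem must match the one in \eqref{eq:repinspace}, which it does as written.
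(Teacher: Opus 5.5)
Your proposal is correct and follows the paper's argument. The principal eigenfunction gives a strict stationary subsolution $\varepsilon\psi$ for small $\varepsilon$, the strong maximum principle places $z(\cdot,t_0)$ above it, parabolic comparison yields the uniform lower bound, and part~2 follows from $w=1-z$. The differences are minor: you compare directly with the stationary subsolution rather than with the monotone solution started from $\varepsilon\psi_1$ as the paper does, and your bound $|g(x,z)-\lambda_1^2(x)|\le Cz$ avoids the slips in the paper's explicit expansion of the reaction term, where the smallness condition is written with $\rho_2^1$ instead of $\rho_1^2$.
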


\begin{proof}
    This result can be shown by building a subsolution for $z$, defined as $\underline{z}$. We begin by rewriting the reaction term:

\begin{align}
    h(x,z) &:= z(1-z)(\lambda_1^2(x)-(\lambda_1^2(x)+ \lambda_2^1(x))z) = z(1-z)(\lambda_1^2(x)(1-z)-z  \lambda_2^1(x)) \\ &= z[\lambda_1^2(x)-2z\lambda_1^2(x) + z^2 \lambda_1^2(x) - z \lambda_2^1(x)].
\end{align}

Let $\psi_1>0$ be a positive eigenfunction for \eqref{eq:eigenproblem}, which exists because of the Krein-Rutman theorem. Then, for $\epsilon>0$ sufficiently small,
\begin{align}
    &\nabla \cdot D(x) \nabla (\epsilon \psi_1)  + \vec{\nu} \cdot (x) \nabla (\epsilon \psi_1) + h(x, \epsilon \psi_1) \\
    &=\epsilon[\nabla \cdot D(x) \nabla \psi_1  + \vec{\nu}(x)\cdot\nabla  \psi_1 + \psi_1 \lambda_1^2(x)] + (\epsilon \psi_1)^2(-2\lambda_1^2 + \epsilon \psi_1 \lambda_1^2(x) + \psi_1  \lambda_2^1(x)) \\ 
    &= \epsilon \rho_1^2 \psi_1 +(\epsilon \psi_1)^2(-2\lambda_1^2 +  \epsilon\psi_1 \lambda_1^2(x) + \psi_1 \lambda_2^1(x)) \\
    &= \epsilon \psi_1[\rho_1^2 + \epsilon \psi_1 (-2\lambda_1^2 +  \epsilon\psi_1 \lambda_1^2(x) + \psi_1 \lambda_2^1(x))] > 0,
\end{align}
given that we choose $\epsilon>0$ small enough such that $$\epsilon \Big|\max_{x \in \Omega} \psi_1\big(-2 \lambda_1^2(x) + \epsilon \psi_1 \lambda_1^2(x)+\psi_1 \lambda_2^1(x)\big)\Big|<\rho_2^1,$$
which is possible because of \ref{H:lambda}. 

It follows that $\epsilon \psi_1$ is a subsolution for the elliptic problem
\begin{equation}
    \begin{cases}
    z(1-z)(\lambda_1^2(x)-(\lambda_1^2(x)+ \lambda_2^1(x))z) + \vec{\nu}(x) \cdot \nabla z + \nabla \cdot D(x) \nabla z =0, \quad x\in \Omega\\
     \frac{\partial z}{\partial \vec{n}} = 0, \quad x\in \partial \Omega \\
    \end{cases}
\end{equation}
corresponding to \eqref{eq:repinspace}. Finally, if $\underline{z}(x,t)$ is a solution to \eqref{eq:repinspace} with $\underline{z}(x,0) = \epsilon\psi_1$, then $\frac{\partial \underline{z}}{\partial t}|_{t=0}>0$, and $\underline{z}(x,t)$ is increasing in $t$. If $z(x,t)$ is a solution to \eqref{eq:repinspace}, which is initially nonnegative and is positive on an open subset of $\Omega$, then by the strong maximum principle, $z(x,t)>0$ on $\overline\Omega$ for $t>0$. Choosing any $t_0>0$, we may take $\epsilon>0$ small enough so that $\epsilon \psi_1< z(x,t_0)$ on $\overline\Omega$. Then $\underline{z}(x,t-t_0)< z(x,t)$ for $t=t_0$, and by the maximum principle, for $t>t_0$, so $z(x,t)$ is bounded below by $\underline{z}(x,t-t_0) > \min_x \epsilon \psi_1 = \delta$.
\end{proof}

This resembles the coexistence regime of the classical replicator
dynamics, although with one key difference: steady states are not uniquely
determined by the coefficients $\lambda_i^j(x)$. In particular,
there can be regions where the local interaction is bistable, i.e., where
both $\lambda_1^2(x)$ and $\lambda_2^1(x)$ are negative, while keeping $\rho_1^2, \rho_2^1>0$. In those regions,
the local reaction term has two stable boundary equilibria, separated by an
unstable interior threshold. Diffusion then couples these locally bistable regions with the rest of the domain, so the limiting steady state can depend on the spatial structure of the coefficients and on the initial condition. As a result, different stable attractors are possible.
Figure~\ref{fig:1stquadrant} illustrates one example, where there are at least two of these steady states.

\begin{figure}[htbp]
    \centering
    \includegraphics[width=.99\textwidth]{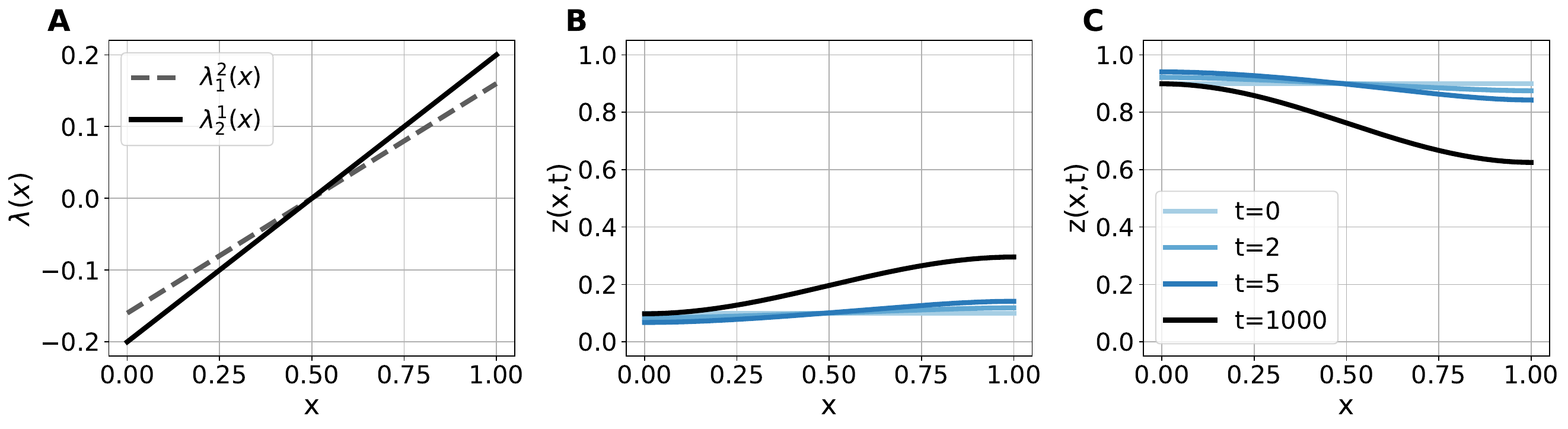}
    \caption{Numerical simulations in the first quadrant ($\rho_1^2>0$, $\rho_2^1>0$), for $D=0.01$, $\vec{\nu}=0$, showing coexistence steady states that depend on the initial condition.   
\textbf{(A)} Local interaction coefficients: 
$\lambda_1^2(x)=-0.16 + 0.32x$ and $\lambda_2^1(x)=-0.2 + 0.4x$ 
($\rho_1^2\approx0.06$, $\rho_2^1\approx0.08$). 
\textbf{(B,C)} Evolution of $z(x,t)$ for the functions in (A), starting from $z(x,0)=0.1$ (B) and $z(x,0)=0.9$ (C). }
    \label{fig:1stquadrant}
\end{figure}

\subsection{Quadrants II and IV: opposite invasion signs}

We now consider the regions where the invasion criteria have opposite signs:
\[
\text{Quadrant II: } \rho_1^2>0,\ \rho_2^1<0,
\qquad
\text{Quadrant IV: } \rho_1^2<0,\ \rho_2^1>0.
\]

In the well-mixed replicator model, these sign patterns lead to competitive
exclusion. The spatial system retains part of this structure but also admits
qualitatively new behavior.

We first identify a sufficient condition that guarantees the classical competitive-exclusion outcome in these quadrants.

\begin{proposition}[Exclusion under sign-definite interactions]
\label{prop:sign_definite_exclusion}
Assume that $\lambda_1^2(x)$ and $\lambda_2^1(x)$ do not change sign in space.
\begin{itemize}
\item If $\lambda_1^2(x)>0$ and $\lambda_2^1(x)<0$ for all $x\in\overline\Omega$,
then species~1 excludes species~2.
\item If $\lambda_1^2(x)<0$ and $\lambda_2^1(x)>0$ for all $x\in\overline\Omega$,
then species~2 excludes species~1.
\end{itemize}
\end{proposition}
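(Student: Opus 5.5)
The plan is to compare $z$ with a spatially constant subsolution coming from an autonomous ODE, much as in case (i) of Proposition~\ref{prop:constant_lambda_main}. The second bullet will follow from the first by the symmetry $w=1-z$.

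\textbf{Step 1: a sign for the reaction term.} Under the first bullet, set $a:=\min_{\overline\Omega}\lambda_1^2>0$ and $b:=\min_{\overline\Omega}(-\lambda_2^1)>0$; these minima are positive because the coefficients are continuous (by \ref{H:lambda}, after taking the Lipschitz representative) and $\overline\Omega$ is compact. Write the reaction term as in the proof of Lemma~\ref{lem:persistence}:
\[
h(x,z)=z(1-z)\bigl(\lambda_1^2(x)(1-z)-\lambda_2^1(x)z\bigr).
\]
For $z\in[0,1]$, both $\lambda_1^2(1-z)$ and $-\lambda_2^1 z$ are nonnegative, so
\[
h(x,z)\ge z(1-z)\bigl(a(1-z)+bz\bigr)\ge m\,z(1-z)=:g(z),
\qquad m:=\min(a,b)>0 .
\]

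\textbf{Step 2: comparison.} Invariance of $[0,1]$ and the strong maximum principle give $0<z(x,t)<1$ for $t>0$ whenever $z_0\not\equiv0,1$. Fix $t_0>0$ and choose $\varepsilon\le\min_{\overline\Omega}z(\cdot,t_0)$. Let $\zeta$ solve the logistic ODE $\zeta'=g(\zeta)$ with $\zeta(t_0)=\varepsilon$. Since $\zeta$ is spatially constant, the diffusion and advection terms vanish, and Step~1 gives
\[
\zeta_t-\nabla\cdot(D\nabla\zeta)-\vec\nu\cdot\nabla\zeta-h(x,\zeta)=g(\zeta)-h(x,\zeta)\le0 .
\]
So $\zeta$ is a subsolution with the Neumann condition. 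The parabolic comparison principle then yields $\zeta(t)\le z(x,t)\le1$ for $t\ge t_0$. The logistic solution satisfies $\zeta(t)\to1$, hence $z(\cdot,t)\to1$ uniformly on $\overline\Omega$, which is exclusion of species~2.

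\textbf{Step 3: the second bullet.} Put $w=1-z$. It satisfies the same equation with the roles of $\lambda_1^2$ and $\lambda_2^1$ exchanged, and now $\lambda_2^1>0$ and $\lambda_1^2<0$. Steps 1–2 applied to $w$ give $w\to1$, i.e. $z\to0$. This case is excluded by \ref{H:lambda2}, so strictly speaking the bullet must be read with \ref{H:lambda2} relaxed; the argument itself does not use \ref{H:lambda2}.

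The main obstacle is Step 1: it needs the strict positivity of the two minima. This is why the statement requires strict sign conditions on all of $\overline\Omega$ and continuity of the coefficients. If the inequalities were only non-strict, the bound $h\ge m\,z(1-z)$ could degenerate. In that case I would try a subsolution of the form $\epsilon\psi_1$, as in Lemma~\ref{lem:persistence}, together with a Lyapunov or monotonicity argument. The comparison step is routine, given the regularity in \ref{H:domain}–\ref{H:lambda}.
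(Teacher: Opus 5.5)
Your proof is correct and follows the route the paper indicates: a spatially constant ODE subsolution plus parabolic comparison, as in Proposition~\ref{prop:constant_lambda_main}. You make explicit the bound $h\ge m\,z(1-z)$, obtained from the minima of $\lambda_1^2$ and $-\lambda_2^1$, and you get the second case via $w=1-z$. Your remark that the second bullet contradicts \ref{H:lambda2}, so that hypothesis must be relaxed there, is also accurate.
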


\begin{proof}
The proof follows from spatially constant sub- and super-solution arguments
analogous to those used in Proposition~\ref{prop:constant_lambda_main}.
\end{proof}

\begin{remark}[Spatial heterogeneity can reverse exclusion]
\label{rem:spatial_reversal}
Proposition~\ref{prop:sign_definite_exclusion} shows that the classical replicator outcome is recovered whenever the interaction coefficients keep a fixed sign. Combined with Lemma~\ref{lem:persistence}, this gives an important constraint: positive spatial coexistence in Quadrants~II and IV cannot arise from sign-definite interaction coefficients alone. It necessarily relies on at least one of the coefficients $\lambda_1^2(x)$ or $\lambda_2^1(x)$ changing sign in space.
Once such spatial sign changes are present, the qualitative dynamics need not follow the classical well-mixed prediction. In particular, the spatial model may admit positive coexistence states even in parameter regimes where one of the spatial invasion rates is negative. This phenomenon is characterized rigorously in Theorem~\ref{thm:local_direction}, where spatial heterogeneity gives rise to a backward bifurcation and allows stable positive states to persist beyond the classical invasion threshold.
\end{remark}

Intuitively, this happens because spatial variation can create localized regions where different species have a competitive advantage, allowing a coexistence state to emerge even when one spatial invasion rate is negative.

Figure~\ref{fig:2nd4thquadrant}
illustrates dynamics and steady states in this regime for fixed $\lambda_i^j(x)$, where depending on the initial condition, the system can either converge to a coexistence state or to $z=0$.

\begin{figure}[htbp]
    \centering
    \includegraphics[width=.99\textwidth]{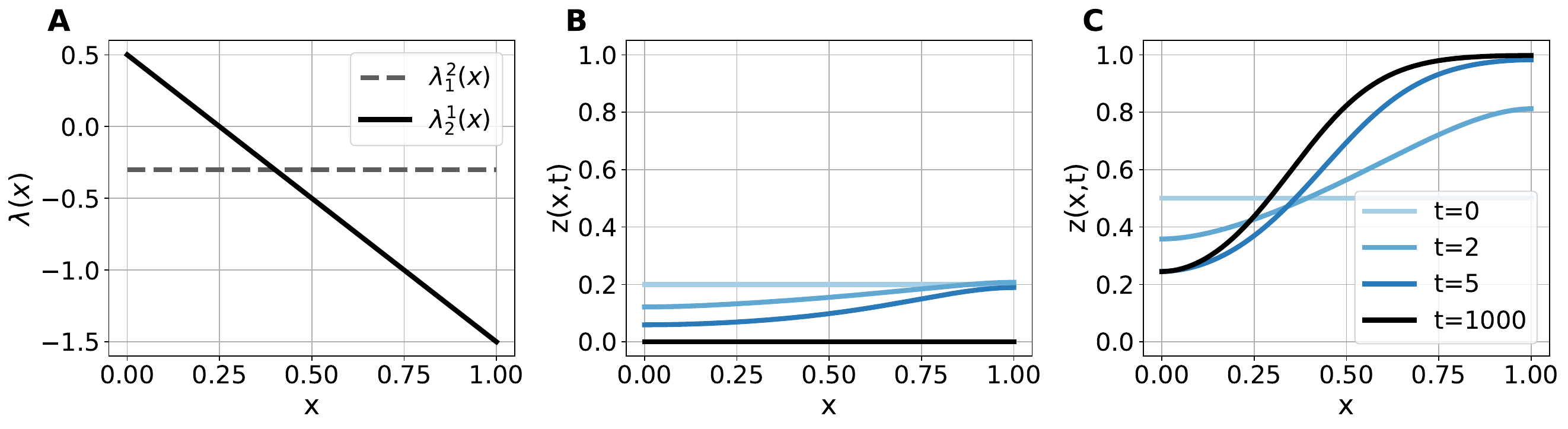}
    \caption{Numerical simulations in the second quadrant ($\rho_1^2<0$, $\rho_2^1>0$), for $D=0.01$, $\vec{\nu}=0$, showing coexistence steady states that depend on the initial condition.   
\textbf{(A)} Local interaction coefficients: 
$\lambda_1^2(x)=-0.3$ and $\lambda_2^1(x)=0.5 - 2x$ 
($\rho_1^2\approx-0.30$, $\rho_2^1\approx0.15$). 
\textbf{(B,C)} Evolution of $z(x,t)$ for the functions in (A), starting from $z(x,0)=0.2$ (B) and $z(x,0)=0.5$ (C). }
    \label{fig:2nd4thquadrant}
\end{figure}

\subsection{Quadrant III: $\rho_1^2<0,\ \rho_2^1<0$}

When both invasion rates are negative, neither species can invade the other
when rare. As in the classical replicator dynamics, both boundary equilibria
are locally stable.


In the spatial setting, however, the dynamics can be richer. Spatial
heterogeneity allows for the construction of intermediate steady states
between the two boundary equilibria. This is not surprising: in the absence
of diffusion and advection, the system reduces to uncoupled replicator
equations at each spatial point, leading to a continuum of reachable steady states
determined by the initial condition.

The more interesting point is that some of these coexistence states can remain stable under the full spatial dynamics. In particular, diffusion can help support stable nontrivial spatial steady states.

Figure~\ref{fig:3rdquadrant} illustrates such a scenario, where the system
admits at least three distinct stable steady states depending on the initial
condition. The boundary equilibria $z=0$ and $z=1$, shown in panels (A) and
(C), are expected from the negative invasion rates. In contrast, the stable
coexistence state shown in panel (B) is a new spatial effect and has
no analogue in the well-mixed system.

\begin{figure}[htbp]
    \centering
    \includegraphics[width=.99\textwidth]{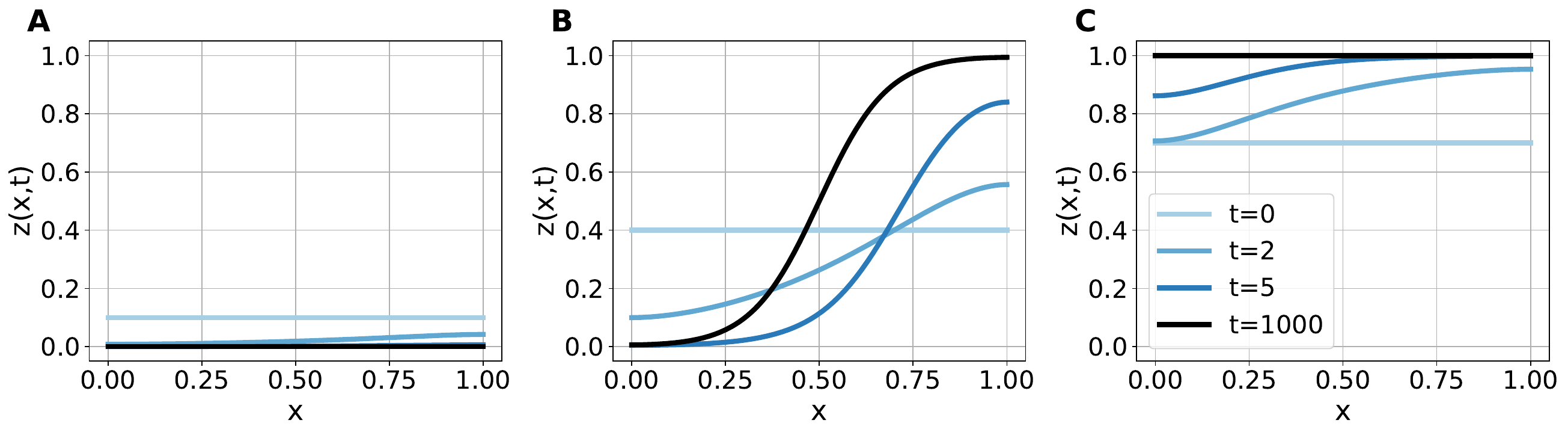}
    \caption{Numerical simulations in the third quadrant ($\rho_1^2<0$, $\rho_2^1<0$), for $D=0.01$, $\vec{\nu}=0$, showing three stable steady states: $z=0$, a coexistence state, and $z=1$.  
\textbf{(A–C)} Evolution of $z(x,t)$ for the local interaction coefficients 
$\lambda_1^2(x)=-1.5+x$ and $\lambda_2^1(x)=-0.5-x$ 
($\rho_1^2\approx-0.72$, $\rho_2^1\approx-0.72$), starting from $z(x,0)=0.1$ (A), $z(x,0)=0.4$ (B), and $z(x,0)=0.7$ (C). }
    \label{fig:3rdquadrant}
\end{figure}

\medskip

In summary, Figure~\ref{fig:changeswithspace},
Quadrants~I and III largely preserve the qualitative structure of
the non-spatial replicator dynamics (persistence and bistability,
respectively), whereas the most significant changes occur in Quadrants~II
and IV, where spatial heterogeneity can enable coexistence despite opposite
invasion signs. The next section develops the bifurcation framework that
explains this phenomenon.

\begin{figure}[H]
    \centering
    \includegraphics[width=.99 \textwidth]{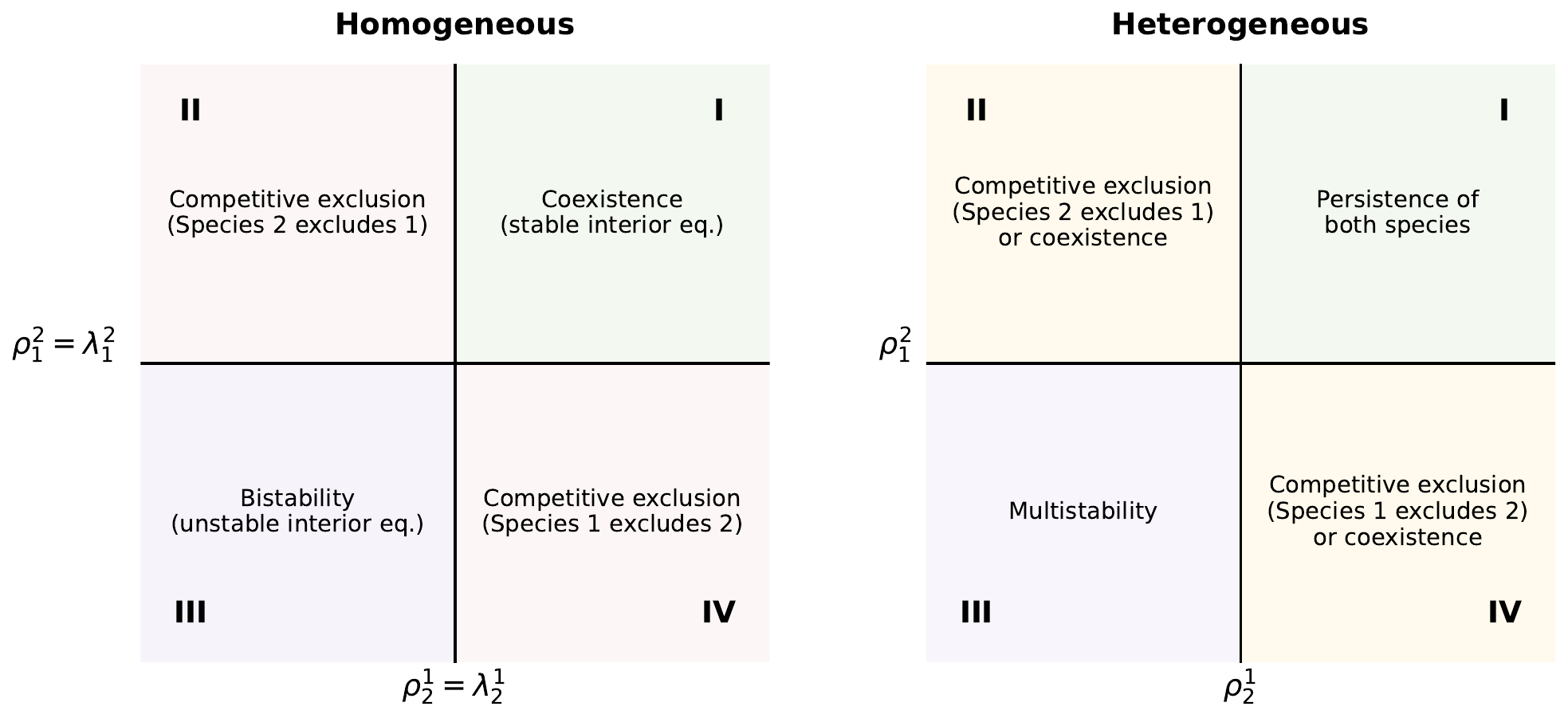}
    \caption{Two-species replicator regimes. The quantities $\lambda_i^j$ and $\rho_i^j$ denote the pairwise invasion fitness of species $i$ in an environment set by species $j$ in the homogeneous and heterogeneous replicator models, respectively.}
    \label{fig:changeswithspace}
\end{figure}

\section{Coexistence through a backward bifurcation}
\label{sec:backwards}

In this section we develop the bifurcation framework that explains the
coexistence regimes observed in Quadrants~II and IV of the
$(\rho_1^2,\rho_2^1)$ plane. The key idea is to introduce a one-parameter
family obtained by scaling $\lambda_1^2$ by a scalar factor $a\in\mathbb R$ and to
track the emergence of nontrivial steady states from the equilibrium
$z\equiv 0$.

\subsection{Bifurcation formulation}
\label{subsec:functional_setup}

Let $a \in \mathbb{R}$. We consider the spatial replicator equation
\begin{equation}
\label{eq:repinspace_with_a}
\begin{cases}
\displaystyle
\frac{\partial z}{\partial t}
=
z(1-z)\Big(a\lambda_1^2(x)-\big(a\lambda_1^2(x)+\lambda_2^1(x)\big)z\Big)
+\vec{\nu}(x)\cdot\nabla z
+\nabla\cdot\!\big(D(x)\nabla z\big),
\\[0.2cm]
\displaystyle
\frac{\partial z}{\partial \vec n}=0,
\quad x\in\partial\Omega,
\\[0.1cm]
z(x,0)=z_0(x).
\end{cases}
\end{equation}

Steady states satisfy
\begin{align}
\label{eq:steady_eq}
0
&=
z(1-z)\Big(a\lambda_1^2(x)-\big(a\lambda_1^2(x)+\lambda_2^1(x)\big)z\Big)
+\vec{\nu}(x)\cdot\nabla z
+\nabla\cdot\!\big(D(x)\nabla z\big) \\
&= g(x,z,a) + \mathcal L_a z,
\end{align}
where $g$ is the quadratic function
\begin{equation}
    g(x,z,a) := 
-\bigl(2a\lambda_1^2(x)+\lambda_2^1(x)\bigr)z
+
\bigl(a\lambda_1^2(x)+\lambda_2^1(x)\bigr)z^2.
\end{equation}
 and the operator $\mathcal L_a$ is defined as
\begin{equation}
\label{eq:La_def}
\mathcal L_a \psi
:=
\nabla\cdot\!\big(D(x)\nabla \psi\big)
+\vec{\nu}(x)\cdot\nabla \psi
+a\lambda_1^2(x)\psi,
\qquad
\partial_{\vec n}\psi=0.
\end{equation}

\paragraph{Preliminaries for bifurcation analysis.}
To apply standard bifurcation theory, it is convenient to rewrite the equilibrium \eqref{eq:steady_eq} in a fixed-point form.

Let $\mathcal T$ be the linear elliptic operator defined by
\[
\mathcal T\varphi
:=
-\nabla\cdot\!\big(D(x)\nabla \varphi\big)
-\vec{\nu}(x)\cdot\nabla \varphi,
\qquad
\partial_{\vec n}\varphi=0.
\]
with domain $D(\mathcal T)=\{\varphi\in W^{2,p}(\Omega):\partial_{\vec n}\varphi=0\}$. Under hypothesis \textbf{\ref{H:domain}-\ref{H:nu}}, $\mathcal T$ is a uniformly elliptic operator with bounded coefficients. Its spectrum in $L^p(\Omega)$ is discrete and consists of isolated eigenvalues with finite multiplicity.

Since $\mathcal T$ is not invertible under Neumann boundary conditions, we introduce the shifted operator
\[
A := \mathcal T - c\,\lambda_1^2(x) I,
\]
where $c>0$ is chosen so that $A$ is invertible in $L^p(\Omega)$. This choice is possible because the spectrum of the weighted eigenvalue problem
is discrete. By elliptic regularity, the inverse operator satisfies

\[A^{-1}:L^p(\Omega)\to W^{2,p}(\Omega) \quad \text{bounded}.
\]
For $p>n$, the Sobolev embedding $$W^{2,p}(\Omega)\hookrightarrow C^1(\overline\Omega)$$ is continuous and compact. 
Consequently,
\[A^{-1}:L^p(\Omega)\to C^1(\overline\Omega) \]
is a compact operator. 

Subtracting $c\,\lambda_1^2(x)z$ from both sides of \eqref{eq:steady_eq}
yields
\begin{equation}\label{eq:shifted_eq_C1}
A z
= (a-c)\lambda_1^2(x)z
+ \bigl(g(x,z,a)-a\lambda_1^2(x)\bigr)z.
\end{equation}

Setting $\mu:=a-c$ and applying $A^{-1}$ to both sides we obtain
\begin{equation}\label{eq:352}
z = \mu \underbrace{A^{-1}\lambda_1^2(x) z \,}_{=:L(z)}
\;+\;
\underbrace{A^{-1}\Big(\bigl(g(x, z,a)-a\lambda_1^2(x)\bigr)z\Big)}_{=:H(\mu,z)}.
\end{equation}

Since $\lambda_1^2\in W^{1,\infty} \subset L^\infty(\Omega)$ and
$C^1(\overline\Omega)\hookrightarrow L^\infty(\Omega)$,
multiplication by $\lambda_1^2$ defines a bounded operator from $
C^1(\overline\Omega)$ to $L^p(\Omega).$
Together with elliptic regularity and the compact embedding
$W^{2,p}(\Omega)\hookrightarrow C^1(\overline\Omega)$, we conclude that
\[
L : C^1(\overline\Omega)\longrightarrow C^1(\overline\Omega)
\]
is a compact linear operator.

As for the nonlinear term $H(a,z)$, since $g$ is $C^1$ in $z$ and satisfies
$g(\cdot,0,a) = a \lambda_1^2(\cdot)$, we can write
\[
g(\cdot,z,a) - a \lambda_1^2(\cdot) = \mathcal O(z) \quad \text{as } \|z\|_{C^1(\overline\Omega)} \to 0,
\]
uniformly for $a$ in bounded intervals.  
Multiplying by $z$ and applying the bounded operator $A^{-1}:L^p(\Omega) \to C^1(\overline\Omega)$, we obtain
\[
\|H(a,z)\|_{C^1} \le C \|z\|_{C^1}^2,
\]
and so,
\[
\frac{\|H(a,z)\|_{C^1}}{\|z\|_{C^1}} \longrightarrow 0 \quad \text{as } \|z\|_{C^1} \to 0,
\]
uniformly for $a$ in any bounded interval.

Therefore it is natural to work in the Banach space
\[
Y := C^1_N(\overline\Omega)
:= \{z\in C^1(\overline\Omega)\;:\;\partial_{\vec n}z=0\ \text{on }\partial\Omega\}.
\]

\subsection{Global bifurcation structure}
\label{subsec:global_bifurcation}

We begin by characterizing the spectral properties of the linearized
operator \eqref{eq:La_def}, which determine the bifurcation from
the trivial branch.

\begin{proposition}[Principal eigenvalue]
\label{prop:principal_eigen_La}
Under assumptions \textbf{\ref{H:domain}-\ref{H:lambda2}}, the eigenvalue problem
\begin{equation}
\label{eq:eigen_La}
\begin{cases}
\mathcal L_a \psi = \sigma \psi, & x\in\Omega,\\
\partial_{\vec n}\psi=0, & x\in\partial\Omega,
\end{cases}
\end{equation}
admits a principal eigenvalue, denoted
$\sigma_0(\mathcal L_a)$, which is algebraically simple and associated
with a strictly positive eigenfunction. Any other eigenfunction changes sign
in $\Omega$.
\end{proposition}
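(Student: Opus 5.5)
The plan is to reduce the statement to the Krein--Rutman theorem, applied to the resolvent of $\mathcal L_a$. We choose a constant $M>0$ so large that
\[
M > \|a\lambda_1^2\|_{L^\infty(\Omega)}
\]
and so that $M - \mathcal L_a$ is invertible. This is possible because \ref{H:nu}, \ref{H:d} and \ref{H:lambda} give bounded coefficients, so the Neumann problem for $(M-\mathcal L_a)u=f$ is uniquely solvable in $W^{2,p}(\Omega)$ for $p>n$ once $M$ is large. We then set
\[
K := (M-\mathcal L_a)^{-1}.
\]
Exactly as in the preliminaries of Section~\ref{subsec:functional_setup}, elliptic $W^{2,p}$ regularity together with the compact embedding $W^{2,p}\hookrightarrow C^1(\overline\Omega)$ shows that $K$ is compact on $Y=C^1_N(\overline\Omega)$. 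Eigenpairs of $\mathcal L_a$ correspond to eigenpairs of $K$ via $\sigma = M - 1/\kappa$, with the same eigenfunctions.

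The key step is strong positivity of $K$ with respect to the cone of nonnegative functions in $Y$. This cone has nonempty interior in $C^1_N(\overline\Omega)$, consisting of the functions that are strictly positive on all of $\overline\Omega$; Neumann conditions impose no vanishing at the boundary. Take $f\ge 0$, $f\not\equiv 0$, and let $u=Kf$, so that
\[
-\nabla\cdot(D\nabla u) - \vec\nu\cdot\nabla u + (M - a\lambda_1^2)u = f ,
\]
with zeroth-order coefficient $M-a\lambda_1^2>0$. The weak maximum principle gives $u\ge 0$. The strong maximum principle gives $u>0$ in $\Omega$, and the Hopf boundary lemma rules out $u=0$ at a boundary point, because there $\partial_{\vec n}u<0$ would contradict $\partial_{\vec n}u=0$. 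Hence $u>0$ on $\overline\Omega$, so $u$ lies in the interior of the cone.

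I expect this step to be the main obstacle. The Hopf lemma needs the interior sphere condition, which holds since $\partial\Omega\in C^{1,1}$ by \ref{H:domain}. It also needs enough regularity of $D$ to apply strong maximum principles for $W^{2,p}$ strong solutions, for which the Lipschitz assumption \ref{H:d} suffices. We also assume, as is implicit in the paper, that $D$ is bounded below by a positive constant so that the operator is uniformly elliptic.

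With compactness and strong positivity in hand, the strong form of the Krein--Rutman theorem applies. The spectral radius $r(K)>0$ is an algebraically simple eigenvalue with eigenfunction $\psi$ in the interior of the cone, so $\psi>0$ on $\overline\Omega$. It is the only eigenvalue of $K$ having a positive eigenvector, and every other eigenvalue satisfies $|\kappa|<r(K)$. Translating back, we define
\[
\sigma_0(\mathcal L_a) := M - \frac{1}{r(K)} ,
\]
which is a real, algebraically simple eigenvalue of $\mathcal L_a$ with strictly positive eigenfunction $\psi$.

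For the last claim, let $\phi$ be an eigenfunction for some other eigenvalue $\sigma\neq\sigma_0$. If $\phi$ did not change sign, then after replacing $\phi$ by $-\phi$ we would have $\phi\ge 0$, $\phi\not\equiv 0$. Then $K\phi=\phi/(M-\sigma)$ lies in the interior of the cone, which forces $M-\sigma>0$ and produces a second positive eigenvector of $K$, contradicting Krein--Rutman. For complex $\sigma$ the claim is read for real eigenfunctions only, and the same argument applies to real eigenvalues. Alternatively, one can pair $\phi$ with the positive principal eigenfunction $\psi^*$ of the adjoint $K^*$ (also supplied by Krein--Rutman) and use $\langle \phi,\psi^*\rangle=0$ when $\sigma\ne\sigma_0$. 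Since $\psi^*>0$, this orthogonality forces $\phi$ to change sign, and this second argument is the one I would write out.
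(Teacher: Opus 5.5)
Your proposal is correct and follows the same route as the paper, which simply invokes the Krein--Rutman theorem by citation (Theorem 2.12 of Cantrell--Cosner). You supply the details that citation hides: the compact resolvent $(M-\mathcal L_a)^{-1}$ on $C^1_N(\overline\Omega)$, strong positivity via the maximum principle and Hopf lemma (using that the Neumann cone has nonempty interior), and the sign-change claim from uniqueness of the positive eigenvector. Your first sign-change argument already suffices. If you write out the adjoint version instead, keep $\psi^*$ as the strictly positive functional in $Y^*$, because the $L^2$-adjoint of $\mathcal L_a$ carries the boundary condition $D\partial_{\vec n}\psi^*=(\vec\nu\cdot\vec n)\psi^*$, which matches the Neumann condition used later in the paper only when $\vec\nu\cdot\vec n=0$ on $\partial\Omega$.
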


\begin{proof}
This follows from the Krein-Rutman theorem applied to the elliptic eigenvalue problem; see, for example, Theorem~2.12 \cite{Cantrell2003}.
\end{proof}

The principal eigenvalue $\sigma_0(\mathcal L_a)$ depends continuously on
the parameter $a$. We define $a_0$ as the point where
\[
\sigma_0(\mathcal L_{a_0})=0.
\]

\begin{definition}[Characteristic values of $L$]
\label{rem:characteristic_value}
The number $\mu$ is a characteristic value of
$L$ if there is a nonzero $\phi \in Y$ such that
$\phi=\mu L\phi$, or, in other words, if $1/\mu$ is a nonzero eigenvalue of the compact operator $L$.
\end{definition}

We can now identify the bifurcation point for nontrivial steady states.

\begin{proposition}[Bifurcation from the trivial branch]
\label{prop:bifurcation_point}
Assume hypotheses \textbf{\ref{H:domain}-\ref{H:lambda2}} hold. Let
$a_0$ satisfy $\sigma_0(\mathcal L_{a_0})=0$. Then $(a_0,0)$ is a
bifurcation point of nontrivial solutions of the steady-state problem
\eqref{eq:steady_eq} in the sense of Rabinowitz's global bifurcation
theorem (Theorem \ref{thm:rabinowitzSI}).
\end{proposition}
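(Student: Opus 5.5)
The plan is to verify the hypotheses of Rabinowitz's global bifurcation theorem (Theorem~\ref{thm:rabinowitzSI}) for the fixed-point formulation \eqref{eq:352}, $z=\mu L z + H(\mu,z)$ in $Y$. We then show that $\mu_0:=a_0-c$ is a characteristic value of $L$ of odd algebraic multiplicity. The compactness of $L:Y\to Y$ and the estimate $\|H(\mu,z)\|_{C^1}=o(\|z\|_{C^1})$, uniform for $\mu$ in bounded intervals, were already established in the preliminaries. There is one point to add here: $H$ must be compact and continuous jointly in $(\mu,z)$. This follows because the Nemytskii map $(a,z)\mapsto (g(\cdot,z,a)-a\lambda_1^2)z$ is continuous and bounded from $\mathbb R\times C^1(\overline\Omega)$ into $L^\infty\subset L^p$, and $A^{-1}:L^p\to C^1(\overline\Omega)$ is compact. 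We also need the range to lie in $Y$, which holds since $A^{-1}$ maps into the Neumann domain $W^{2,p}$ with $\partial_{\vec n}\varphi=0$.

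\textbf{Identifying the characteristic value.} Observe that $\phi=\mu L\phi$ is equivalent to $A\phi=\mu\lambda_1^2\phi$, that is, $-\nabla\cdot(D\nabla\phi)-\vec\nu\cdot\nabla\phi-(\mu+c)\lambda_1^2\phi=0$, which in turn is $\mathcal L_{\mu+c}\phi=0$. By Proposition~\ref{prop:principal_eigen_La}, at $a_0$ the eigenvalue $0=\sigma_0(\mathcal L_{a_0})$ has a positive eigenfunction $\psi_0$. Hence $\psi_0=\mu_0L\psi_0$, and $\mu_0$ is a characteristic value of $L$ with eigenvector $\psi_0$.

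\textbf{Simplicity.} We show that $\mu_0$ has algebraic multiplicity one, which gives the required odd multiplicity. For the geometric part, if $\mu_0L\phi=\phi$ then $\mathcal L_{a_0}\phi=0$. Any such $\phi$ is an eigenfunction of $\mathcal L_{a_0}$ for the eigenvalue $0=\sigma_0$, and algebraic simplicity of $\sigma_0$ gives $\phi\in\operatorname{span}\{\psi_0\}$. For the algebraic part, suppose $(I-\mu_0L)^2\phi=0$, and set $\chi=(I-\mu_0L)\phi=k\psi_0$. Applying $A$ gives $\mathcal L_{a_0}\phi=-kA\psi_0$ up to sign conventions. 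Since $A\psi_0=\mu_0\lambda_1^2\psi_0$, this reads $\mathcal L_{a_0}\phi=-k\mu_0\lambda_1^2\psi_0$. Pair with the positive principal eigenfunction $\psi_0^*$ of the adjoint $\mathcal L_{a_0}^*$, which exists by Krein--Rutman for the adjoint problem. The left side integrates to $0$, so $k\mu_0\int_\Omega\lambda_1^2\psi_0\psi_0^*\,dx=0$. By \ref{H:lambda2} and the positivity of $\psi_0,\psi_0^*$, the integral is strictly positive. Provided $\mu_0\neq0$, this forces $k=0$. The condition $\mu_0\neq0$ can be arranged by choosing the shift $c\neq a_0$, which is allowed since $c$ is only required to make $A$ invertible and such $c$ form an open set avoiding a discrete set. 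With $k=0$ we get $\phi\in\ker(I-\mu_0L)$, so the algebraic multiplicity is one.

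\textbf{Main obstacle.} The delicate step is the existence of $a_0$ together with this transversality argument in the non-self-adjoint case. The same positivity of $\int_\Omega\lambda_1^2\psi_0\psi_0^*$ also shows $\frac{d}{da}\sigma_0(\mathcal L_a)>0$, since the derivative equals $\int\lambda_1^2\psi\psi^*/\int\psi\psi^*$. Hence $a\mapsto\sigma_0(\mathcal L_a)$ is strictly increasing, and any zero $a_0$ is unique and crossed transversally. I take the existence of $a_0$ as given by the statement's hypothesis. With odd multiplicity verified, Theorem~\ref{thm:rabinowitzSI} yields a continuum of nontrivial solutions emanating from $(\mu_0,0)$, equivalently from $(a_0,0)$ in the original parameter. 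This is the claim of Proposition~\ref{prop:bifurcation_point}.
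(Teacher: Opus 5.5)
Your proposal is correct and follows the same route as the paper, which simply cites Proposition~3.9 of Cantrell--Cosner. That route is to recast the steady-state problem as $z=\mu Lz+H(\mu,z)$, check that $\mu_0=a_0-c$ is a simple characteristic value of $L$, and apply Theorem~\ref{thm:rabinowitzSI}; your Jordan-chain argument, using the adjoint eigenfunction and the positivity of $\int_\Omega\lambda_1^2\psi_0\psi_0^*\,dx$, is exactly the verification that citation supplies, and it treats the advective (non-self-adjoint) case explicitly. One small simplification: you need not arrange $\mu_0\neq0$, because $c=a_0$ would give $A=-\mathcal L_{a_0}$, which is singular, so invertibility of $A$ already forces $c\neq a_0$.
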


\begin{proof}
Application of Proposition 3.9 of \cite{Cantrell2003}.
\end{proof}

This means that there is a connected continuum $\mathcal C$ of nontrivial solutions $(a,z)\in \mathbb R \times Y$ of \eqref{eq:steady_eq} that bifurcates from the trivial equilibrium at $(a_0,0)$. We now study the structure of this bifurcation branch, showing that it is unbounded in the $a$-direction and does not meet the trivial branch at any other value. For this, we will make use of the following lemma:

\begin{lemma}
    \label{lemma:tours}
    Assume that \(\rho_2^1>0\). Then there exists \(a_{\mathrm{crit}}<0\)
    such that, for every \(a<a_{\mathrm{crit}}\), any solution of
    \eqref{eq:repinspace_with_a} with biologically admissible initial data
    \[
    0\le z_0(x)\le 1, 
    \qquad z_0\not\equiv 1,
    \]
    converges to \(z=0\).
\end{lemma}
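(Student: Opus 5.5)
The plan is to work in two stages: first push $z$ uniformly away from $1$ using the positive invasion rate $\rho_2^1$ of species~2, then, with $1-z$ bounded below, dominate the equation for $z$ by a linear problem whose principal eigenvalue is very negative once $a$ is very negative.

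\textbf{Step 1 (persistence of species 2, uniformly in $a<0$).} I would set $w=1-z$. A direct computation gives
\[
w_t=w(1-w)\bigl(\lambda_2^1(x)(1-w)-a\lambda_1^2(x)w\bigr)+\vec\nu\cdot\nabla w+\nabla\cdot(D\nabla w),
\]
with Neumann data and $w_0\ge0$, $w_0\not\equiv0$. The linearization at $w=0$ has potential $\lambda_2^1$, independent of $a$, so its principal eigenvalue is $\rho_2^1>0$. For $a<0$, hypothesis \ref{H:lambda2} gives $-a\lambda_1^2 w\ge0$. Hence the reaction term is bounded below by $w(1-w)^2\lambda_2^1$, which is exactly the $a=0$ nonlinearity for $w$.

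I would then repeat the subsolution construction of Lemma~\ref{lem:persistence}(2), using $\varepsilon\psi_2$ with $\psi_2>0$ the principal eigenfunction for $\rho_2^1$. The point to check is that the admissible $\varepsilon$, and hence $\delta=\varepsilon\min\psi_2$, can be chosen independently of $a<0$, since the $a$-dependent term only helps. The output is: there exist $\delta>0$ independent of $a<0$, and $T$ depending on the solution, such that $z\le 1-\delta$ on $\overline\Omega\times[T,\infty)$.

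\textbf{Step 2 (linear domination).} For $t\ge T$, using $a<0$, $\lambda_1^2\ge0$ and $\delta\le 1-z$, I would bound the reaction term for $z$ by
\[
z(1-z)\bigl(a\lambda_1^2(1-z)-\lambda_2^1 z\bigr)\le a\delta^2\lambda_1^2 z+\|(\lambda_2^1)^-\|_\infty z^2\le \bigl(a\delta^2\lambda_1^2+K\bigr)z ,
\]
where $K=\|(\lambda_2^1)^-\|_\infty$ and the last step uses $z\le1$. Thus $z$ is a subsolution of the linear problem
\[
u_t=\nabla\cdot(D\nabla u)+\vec\nu\cdot\nabla u+\bigl(a\delta^2\lambda_1^2+K\bigr)u .
\]
Taking $\bar u(x,t)=Me^{\sigma t}\phi_a(x)$, where $\phi_a>0$ is the principal eigenfunction of $\mathcal L_{a\delta^2}$ (Proposition~\ref{prop:principal_eigen_La}) and $\sigma=\sigma_0(\mathcal L_{a\delta^2})+K$, the comparison principle gives $z\le \bar u\to0$ provided $\sigma_0(\mathcal L_{a\delta^2})<-K$.

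\textbf{Step 3 (choice of $a_{\mathrm{crit}}$) and the main obstacle.} It remains to find $a_{\mathrm{crit}}<0$ with $\sigma_0(\mathcal L_{a\delta^2})<-K$ for all $a<a_{\mathrm{crit}}$. By monotonicity and continuity of $a\mapsto\sigma_0(\mathcal L_a)$, this amounts to showing $\sigma_0(\mathcal L_a)\to-\infty$ as $a\to-\infty$, and this is the step I expect to be hardest. For $\vec\nu=0$ I would use the Rayleigh quotient: any limit of normalized eigenfunctions must concentrate on $\{\lambda_1^2=0\}$. So divergence holds when $\lambda_1^2>0$ except on a set with empty interior. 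In the non-self-adjoint case I would argue through the adjoint eigenfunction, or via the standard Liouville-type transformation when $\vec\nu$ is a gradient.

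If $\{\lambda_1^2=0\}$ has interior, $\sigma_0(\mathcal L_a)$ converges instead to a Dirichlet eigenvalue on that set. Step 2 is then too crude: $K$ should be replaced by the sharper potential $(\lambda_2^1)^-$, and one would compare against the limiting problem on $\{\lambda_1^2=0\}$ with potential $-\lambda_2^1$, using $\rho_2^1>0$. I would first try to obtain the result under the nondegeneracy assumption, and then treat this refinement separately.
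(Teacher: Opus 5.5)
Your argument is correct in the setting where the paper's own proof operates, and after the common first step it takes a genuinely different route. Both proofs use $\rho_2^1>0$ to get $z\le 1-\delta$ for $t\ge T$. The paper then compares $z$ with a spatially constant solution of the bistable ODE $\dot{\bar z}=\bar z(1-\bar z)\bigl(a\underline{\lambda_1^2}(1-\bar z)-\underline{\lambda_2^1}\bar z\bigr)$, where $\underline{\lambda_1^2}=\min\lambda_1^2$ and $\underline{\lambda_2^1}=\min\lambda_2^1$. It chooses $a_{\mathrm{crit}}$ so that the unstable threshold $\mu_a$ exceeds $1-\delta$. You instead discard the bistable structure and dominate $z$ by a linear problem with potential $a\delta^2\lambda_1^2+\|(\lambda_2^1)^-\|_\infty$.

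The paper's comparison is purely ODE-based and gives a slightly better threshold: it has one power of $\delta$ where yours has $\delta^2$. Your route gives an exponential decay rate. It also covers $\lambda_2^1\ge0$ for every $a<0$ without any lower bound on $\lambda_1^2$. More generally it works whenever $\sigma_0(\mathcal L_a)\to-\infty$, which your compactness argument yields when $|\{\lambda_1^2=0\}|=0$. You check explicitly that $\delta$ does not depend on $a<0$, because the $a$-term in the $w$-equation has a favourable sign. The paper uses this only tacitly, and its choice of $a_{\mathrm{crit}}$ through $\mu_a>1-\delta$ would be circular without it.

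The step you flag as the main obstacle is not an obstacle in the paper's setting. The paper's proof explicitly uses $\underline{\lambda_1^2}>0$, which is stronger than \textbf{(H5)}. With $m:=\min\lambda_1^2>0$ you have $a\delta^2\lambda_1^2+K\le a\delta^2 m+K$. Then $e^{(a\delta^2 m+K)(t-T)}\max_{\overline\Omega}z(\cdot,T)$ is already a spatially constant supersolution. So $a_{\mathrm{crit}}=-K/(\delta^2 m)$ works, with no eigenvalue asymptotics and with or without advection.

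Conversely, do not pursue the refinement when $\{\lambda_1^2=0\}$ has nonempty interior. There the conclusion can fail, because $\rho_2^1>0$ carries no information about that region. For example, take $n=1$, $\vec\nu=0$, an interval $U\Subset\Omega$, $\lambda_1^2=\operatorname{dist}(\cdot,U)$, and $\lambda_2^1=-M$ on $U$. Let $\lambda_2^1$ be large and positive on another subinterval, so that $\rho_2^1>0$.
\begin{itemize}
\item On $U$ the steady-state equation is $Dz''+Mz^2(1-z)=0$, which does not involve $a$.
\item For $M|U|^2/D$ large it has a solution with values in $(0,1)$ that vanishes on $\partial U$.
\item Its extension by zero is a subsolution for every $a$.
\item Hence the solution starting from it is nondecreasing in $t$ and cannot converge to $0$.
\end{itemize}

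A smaller point: in dimension $n\ge2$, ``empty interior'' of $\{\lambda_1^2=0\}$ does not force $\sigma_0(\mathcal L_a)\to-\infty$. A nonzero $H^1$ function can vanish on a dense open set of small capacity. What your argument actually uses is that $\{\lambda_1^2=0\}$ has measure zero.
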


\begin{proof}
Using the persistence result from Lemma~\ref{lem:persistence}, we know that if $\rho_2^1>0$, then species 2 persists uniformly in space, and there exists $\delta>0$ such that the density of species 1, $z<1-\delta$ for all $x\in \Omega$ and $T$ big enough. Now, we will show that for any given $\lambda_1^2, \lambda_2^1$, we can build a super-solution going to $0$, based on our knowledge of the non-spatial replicator dynamics by choosing a negative enough $a$. 

Set
\[
\overline z(x, 0)=1-\delta .
\]
Since \(\overline z\) is chosen to be spatially constant, the diffusion and
advection terms vanish. We now bound the reaction term from above. Write
\[
z(1-z)\Big(a\lambda_1^2(x)
-\big(a\lambda_1^2(x)+\lambda_2^1(x)\big)z\Big)
=
z(1-z)\Big(a\lambda_1^2(x)(1-z)-\lambda_2^1(x)z\Big).
\]
Let
\[
\underline{\lambda_1^2}
:=
\min_{x\in\Omega}\lambda_1^2(x),
\qquad
\underline{\lambda_2^1}
:=
\min_{x\in\Omega}\lambda_2^1(x).
\]
For \(a<0\),
\[
a\lambda_1^2(x)
\le
a\underline{\lambda_1^2}, \quad -\lambda_2^1(x)z
\le
-\underline{\lambda_2^1}z.
\]
Hence, for \(0\le z\le 1\),
\[
z(1-z)\Big(a\lambda_1^2(x)(1-z)-\lambda_2^1(x)z\Big)
\le
z(1-z)\Big(a\underline{\lambda_1^2}(1-z)
-\underline{\lambda_2^1}z\Big).
\]

We now define \(\overline z(t)\) as the solution of the scalar ODE
\[
\frac{d\overline z}{dt}
=
\overline z(1-\overline z)
\Big(a\underline{\lambda_1^2}(1-\overline z)
-\underline{\lambda_2^1}\overline z\Big),
\qquad
\overline z(0)=1-\delta .
\]
By the inequality above, \(\overline z(t)\) is a supersolution of
\eqref{eq:repinspace_with_a}. Since
\[
z(x,T)\le 1-\delta=\overline z(0),
\]
the comparison principle gives
\[
z(x,t)\le \overline z(t)
\qquad \text{for all }x\in\Omega,\ t\ge T .
\]

It remains to choose \(a<0\) so that \(\overline z(t)\to0\). If
\(\underline{\lambda_2^1}\ge0\), then
\[
a\underline{\lambda_1^2}(1-\overline z)
-\underline{\lambda_2^1}\overline z<0
\qquad \text{for all }0<\overline z<1,
\]
and therefore \(\overline z(t)\to0\) for every \(a<0\).

If \(\underline{\lambda_2^1}<0\), the scalar equation is bistable. Its
unstable threshold is
\[
\mu_a
=
\frac{a\underline{\lambda_1^2}}
{a\underline{\lambda_1^2}+\underline{\lambda_2^1}}.
\]
Since \(a<0\), \(\underline{\lambda_1^2}>0\), and
\(\underline{\lambda_2^1}<0\), we have
\[
\mu_a\to1
\qquad \text{as }a\to-\infty .
\]
Thus, we can choose \(a_{\mathrm{crit}}<0\) such that
\[
\mu_a>1-\delta
\qquad \text{for all }a<a_{\mathrm{crit}}.
\]
For such values of \(a\), the initial value
\(\overline z(0)=1-\delta\) lies below the unstable threshold \(\mu_a\), and
therefore the scalar ODE satisfies
\[
\overline z(t)\to0 .
\]
Since 
\(
z(x,t)\le \overline z(t),
\)
\[
z(\cdot,t)\to0
\qquad \text{as }t\to\infty .
\]
\end{proof}

\begin{remark}
Lemma~\ref{lemma:tours} depends on the sign structure imposed in \textbf{(H5)}. In particular, the proof uses that taking \(a<0\) makes \(a\lambda_1^2(x)\) act in the same way throughout the domain. This is a limitation of the comparison argument used here, and the case of sign-changing \(\lambda_1^2\) is not covered by the present proof.

This Lemma should not be interpreted as excluding the possibility of other multistable regimes with \(\rho_1^2<0\) and \(\rho_2^1>0\). Rather, if such regimes occur, they are not explained by the mechanism isolated here. 
\end{remark}

\begin{theorem}[Global structure of the positive bifurcation branch] \label{thm:thm2}
Under the assumptions stated above, let $\mathcal C$ denote the connected component 
of nontrivial solutions $(a,z)\in \mathbb R\times Y$ of \eqref{eq:steady_eq}
that bifurcates from the trivial equilibrium at $(a_0,0)$, then:

\begin{enumerate}[label=(\roman*)]
\item \emph{(Uniform pointwise bound)}  
If $(a,z)\in\mathcal C$ with $z\ge0$, then 
\[
0 \le z(x) \le 1 \qquad \text{for all } x\in \overline\Omega.
\]

\item \emph{(Exclusion of hypothesis ii from Theorem \ref{thm:rabinowitzSI})}  
$\mathcal C$ does not meet the trivial branch 
$\{(a,0): a\in\mathbb R\}$ at any parameter value $a \neq a_0$.


\item \emph{(Unboundedness in the \(a\)-direction)}
The connected component $\mathcal C$ of nontrivial nonnegative steady states bifurcating from $(a_0,0)$ is unbounded in $\mathbb R\times Y$. Moreover, this unboundedness occurs in the a-direction, in the sense that
\[
\sup\{a:(a,z)\in\mathcal C\}=+\infty .
\]

\end{enumerate}
\end{theorem}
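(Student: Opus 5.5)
The three parts use different tools: a maximum-principle bound for (i), a positivity and uniqueness argument for (ii), and Rabinowitz's alternative plus a priori bounds for (iii).

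\textbf{Part (i).} Let $(a,z)\in\mathcal C$ with $z\ge 0$ and $z\not\equiv 0$. Suppose $\max_{\overline\Omega} z = z(x_0)>1$. At an interior maximum we have $\nabla z(x_0)=0$ and $\nabla\cdot(D\nabla z)(x_0)\le 0$. At a boundary maximum, Hopf's lemma together with $\partial_{\vec n}z=0$ gives the same conclusion, after the standard argument that $z$ is not constant near $x_0$.

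For $z>1$ the reaction term is
\[
z(1-z)\bigl(a\lambda_1^2(1-z)-\lambda_2^1 z\bigr).
\]
The sign of $z(1-z)$ is fixed, but the sign of the bracket is not, so a pointwise argument alone will not close. I would therefore use $w=z-1$, which satisfies the linear equation $\nabla\cdot(D\nabla w)+\vec\nu\cdot\nabla w+c(x)w=0$, where $c=-z\,(a\lambda_1^2(1-z)-\lambda_2^1z)$ is bounded. I would then apply the strong maximum principle on the set $\{z>1\}$.

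A cleaner route uses connectedness. Near $(a_0,0)$ we have $z$ small, so $0\le z<1$. If the bound failed somewhere along $\mathcal C$, there would be a first solution on the continuum touching $z=1$ from below. At that point $w=1-z\ge 0$ attains the value $0$, and the strong maximum principle forces $w\equiv 0$. The constant $z\equiv 1$ is a steady state, but it is isolated from the branch of small solutions. In the same way, $z>0$ is propagated along $\mathcal C$ from the positivity of the eigenfunction near bifurcation.

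\textbf{Part (ii).} Suppose $\mathcal C$ met the trivial branch at $(a_1,0)$ with $a_1\ne a_0$. Take a sequence $(a_k,z_k)\to(a_1,0)$ with $z_k>0$. Normalize $\phi_k=z_k/\|z_k\|_Y$ and use the compactness of $L$ from \eqref{eq:352} together with the estimate $\|H\|\le C\|z\|^2$. A subsequence then converges to $\phi\ge 0$ with $\|\phi\|=1$ and $\mathcal L_{a_1}\phi=0$. By Proposition~\ref{prop:principal_eigen_La}, a nonnegative eigenfunction must be the principal one, so $\sigma_0(\mathcal L_{a_1})=0$.

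It remains to show that $a\mapsto\sigma_0(\mathcal L_a)$ is strictly increasing, so that $a_0$ is its unique zero. Strict monotonicity follows from (H5): pair with the adjoint principal eigenfunction $\psi^*>0$ to get $\partial_a\sigma_0=\int\lambda_1^2\psi\psi^*\big/\int\psi\psi^*>0$. Hence $a_1=a_0$.

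\textbf{Part (iii).} Rabinowitz's theorem gives the alternative: $\mathcal C$ is either unbounded or meets the trivial branch at another characteristic value. Part (ii) excludes the second option, so $\mathcal C$ is unbounded. By (i), $0\le z\le 1$. Elliptic $W^{2,p}$ estimates on the steady-state equation then bound $\|z\|_Y$ uniformly for $a$ in any bounded set, so the unboundedness must occur in $a$.

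To rule out $a\to-\infty$: if $\rho_2^1>0$ (the relevant case for this paper), Lemma~\ref{lemma:tours} shows that for $a<a_{\mathrm{crit}}$ every admissible solution converges to $0$. A positive steady state with $z\not\equiv 1$ would be a stationary solution and so could not converge to $0$. Hence no positive steady states exist there, and $\inf a>-\infty$ on $\mathcal C$, which leaves $\sup a=+\infty$.

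If $\rho_2^1\le 0$ in general, I would instead try an integral obstruction: multiply by $\psi^*$ for very negative $a$. This is the step I expect to be the main obstacle. A second weak point is the rigorous propagation of the bounds $0<z<1$ along the continuum in (i), which requires $\mathcal C$ to stay away from $z\equiv 1$ and from sign-changing solutions. I would handle it via the local positivity near the bifurcation point and an open–closed argument.
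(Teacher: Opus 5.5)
Your part (iii) follows the paper's argument: Rabinowitz's alternative, exclusion of alternative 2 by (ii), $W^{2,p}$ bounds from $0\le z\le 1$ for bounded $a$, and Lemma~\ref{lemma:tours} to block $a\to-\infty$.

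In (i) and (ii) you take a different route, and in both places your route is the one that closes.
\begin{itemize}
\item For (i), the paper notes that $0$ and $1$ are sub- and supersolutions and invokes comparison. That does not place an arbitrary steady state between them. With $\lambda_1^2\equiv1$, $\lambda_2^1\equiv b>0$ and $a<-b$, the constant $a/(a+b)>1$ is a positive steady state. So membership in $\mathcal C$ has to be used, which is what your connectedness argument does.
\item For (ii), the paper uses the same adjoint pairing you use, in a two-point form that avoids differentiating $\sigma_0$, to show that $\sigma_0(\mathcal L_a)$ vanishes only at $a_0$. But alternative 2 of Theorem~\ref{thm:rabinowitzSI} concerns any characteristic value of $L$. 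The weighted Neumann problem $\mathcal L_a\phi=0$ has non-principal ones, infinitely many when $\vec\nu=0$. Your blow-up step is what rules them out: it produces a nonnegative kernel element at any return point, hence a principal zero by Proposition~\ref{prop:principal_eigen_La}.
\end{itemize}

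Two points need fixing.

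\textbf{First, part (i).} The phrase ``the first solution on the continuum touching $z=1$'' has no meaning for a continuum. Also, ``$z\equiv 1$ is isolated from the branch of small solutions'' is not the reason the branch avoids $z\equiv1$. Use a clopen argument on the set $\{(a_0,0)\}\cup\{(a,z)\in\mathcal C^+:0<z<1\}$.
\begin{itemize}
\item It is open in $\mathcal C^+$ by Proposition~\ref{prop:local_parametrisation} and the strict inequalities.
\item It is closed because a limit point satisfies $0\le z\le 1$. The strong maximum principle then leaves three cases: $0<z<1$; $z\equiv 0$, which your blow-up forces to occur only at $a=a_0$; or $z\equiv 1$.
\item The case $z\equiv1$ is excluded by the same blow-up applied to $w_k=1-z_k$. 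The linearisation at $z\equiv 1$ is $\nabla\cdot(D\nabla\,\cdot)+\vec\nu\cdot\nabla+\lambda_2^1$, independent of $a$. The normalised limit is therefore a nonnegative kernel element of it, which forces $\rho_2^1=0$.
\end{itemize}
So $\rho_2^1\neq 0$ is exactly what (i) needs. Running (i) and (ii) as one clopen argument also removes the circularity between them, since your (ii) takes $z_k>0$ from (i).

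\textbf{Second, drop the plan for $\rho_2^1\le 0$.} The conclusion of (iii) is false there. Take $\lambda_1^2\equiv 1$ and $\lambda_2^1\equiv -1$.
\begin{itemize}
\item The positive branch contains the constants $a/(a-1)$ for $a<0$, which run to $a\to-\infty$.
\item For $a\ge 0$, the comparison argument of Proposition~\ref{prop:constant_lambda_main} sends every solution with data in $[0,1]$, $\not\equiv 0$, to $1$. So $\mathcal C^+$ stays in $\{a\le 0\}$.
\end{itemize}
The hypothesis $\rho_2^1>0$ of Lemma~\ref{lemma:tours} is therefore an implicit hypothesis of the theorem, and the paper's proof depends on it in the same way yours does.
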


\begin{proof}

\textbf{(i)}

The constant functions $\underline z\equiv 0$ and
$\overline z\equiv 1$ are respectively sub- and super-solutions
of \eqref{eq:steady_eq}. The comparison principle yields
$0\le z\le 1$.

\textbf{(ii)}

We show that the principal eigenvalue of \(\mathcal L_a\) can vanish for at most one value of \(a\). 

Fix \(a_1\) and suppose \(\sigma(\mathcal L_{a_1})=0\).
Let \(\phi_{a_1}>0\) be the principal eigenfunction of \(\mathcal L_{a_1}\), and
let \(\psi_{a_1}^*>0\) be the principal eigenfunction of the adjoint operator
\(\mathcal L_{a_1}^*\),
\begin{equation}
\label{eq:La_adjoing}
\mathcal L_{a_1}^* \psi^*
:=
\nabla\cdot \big(D(x)\nabla \psi^*\big)
-\nabla\cdot \big(\vec{\nu}(x)\psi^*\big)
+a_1\lambda_1^2(x)\psi^*.
\end{equation}

Assume that there exists \(a_2 \neq a_1\) such that
\(\sigma(\mathcal L_{a_2})=0\), and let \(\phi_{a_2}>0\) be the corresponding
principal eigenfunction of \(\mathcal L_{a_2}\).

For any two numbers \(a_1,a_2\) we have 
\[
\mathcal L_{a_2}
= \mathcal L_{a_1} + (a_2-a_1)\,\lambda_1^2(\cdot).
\]

Now we multiply by the eigenfunction \(\phi_{a_2}\). Since
\(\mathcal L_{a_2}\phi_{a_2}=0\) (principal eigenvalue \(0\)), we have
\[
0 = \mathcal L_{a_2}\phi_{a_2}
= \mathcal L_{a_1}\phi_{a_2} + (a_2-a_1)\,\lambda_1^2\,\phi_{a_2}.
\]
Taking the \(L^2(\Omega)\) inner product of both sides with \(\psi_{a_1}^*\)
and using linearity of the inner product 
\[
0
= \langle \mathcal L_{a_2}\phi_{a_2}, \psi_{a_1}^* \rangle
= \langle \mathcal L_{a_1}\phi_{a_2}, \psi_{a_1}^* \rangle
+ (a_2-a_1)\,\langle \lambda_1^2\,\phi_{a_2}, \psi_{a_1}^* \rangle.
\]

Now, we use the adjoint relation:
\[
\langle \mathcal L_{a_1}\phi_{a_2}, \psi_{a_1}^* \rangle
= \langle \phi_{a_2}, \mathcal L_{a_1}^*\psi_{a_1}^* \rangle.
\]
Since \(\mathcal L_{a_1}^*\psi_{a_1}^*=0\), the first term vanishes and we obtain
\[
0 = (a_2-a_1)\,\langle \lambda_1^2\,\phi_{a_2}, \psi_{a_1}^* \rangle.
\]

By the Krein-Rutman theorem, the eigenfunctions \(\phi_{a_2}\) and
\(\psi_{a_1}^*\) are strictly positive in \(\Omega\). Under
the hypothesis \ref{H:lambda2} \(\lambda_1^2>0\) a.e. in $\Omega$ and $\lambda_1^2>0$ on a nonempty open subset of $\Omega$, so the
integral
\(\langle \lambda_1^2\,\phi_{a_2}, \psi_{a_1}^* \rangle\) is strictly positive.
Therefore  \((a_2-a_1)=0\), and $a_2=a_1$.

\textbf{(iii)}

Since alternative (2) in Theorem~\ref{thm:rabinowitzSI} has been excluded
by \textbf{(ii)}, the remaining possibility is that the component
\(\mathcal C\) is unbounded in \(\mathbb R\times Y\).

We now show that this unboundedness cannot occur through the \(Y\)-norm while
\(a\) remains bounded. Let \((a,z)\in\mathcal C\), with \(a\) in a bounded
interval \([a_-,a_+]\). By \textbf{(i)}, we have the uniform pointwise bound
\[
0\le z(x)\le1
\qquad \text{for all }x\in\overline\Omega .
\]
Define the nonlinear term
\[
f(x,z,a)
:=
z(1-z)
\Big(
a\lambda_1^2(x)
-
(a\lambda_1^2(x)+\lambda_2^1(x))z
\Big).
\]
Since \(z\) is bounded between \(0\) and \(1\), and \(a\) is in a bounded
interval, \(f\) is uniformly bounded in \(L^\infty(\Omega)\). Therefore
\[
\|f\|_{L^\infty(\Omega)}\le C_0
\]
for some constant \(C_0\).

The steady-state equation can be written as
\[
-\nabla\cdot(D(x)\nabla z)-\vec\nu(x)\cdot\nabla z
=
f(x,z,a)
\qquad \text{in }\Omega,
\qquad
\partial_{\vec n}z=0
\qquad \text{on }\partial\Omega .
\]
Using the elliptic estimate for a strong elliptic operator,  Theorem 9.11 \cite{Gilbarg2001}, for \(p>n\),
\[
\|z\|_{W^{2,p}(\Omega)} \le C \bigl( \|Lz\|_{L^p(\Omega)} + \|z\|_{L^p(\Omega)} \bigr).
\]
Since $Lz = f$, it follows that
\begin{equation}
\|z\|_{W^{2,p}(\Omega)} \le C \bigl( \|f\|_{L^p(\Omega)} + \|z\|_{L^p(\Omega)} \bigr).
\end{equation}
Since \(0\le z\le1\), and since \(f\) is uniformly bounded, this gives
\[
\|z\|_{W^{2,p}(\Omega)}\le C .
\]
Finally, the Sobolev embedding
\[
W^{2,p}(\Omega)\hookrightarrow C^1(\overline\Omega)
\]
implies
\[
\|z\|_{Y}\le C ,
\]
with \(C\) independent of \(a\) in bounded intervals. Thus, along
\(\mathcal C\), the \(Y\)-norm of \(z\) cannot blow up while \(a\) remains
bounded.

It remains to exclude the possibility that the component extends to
\(a\to-\infty\). By Lemma~\ref{lemma:tours}, there exists
\(a_{\mathrm{crit}}<0\) such that, for \(a<a_{\mathrm{crit}}\), all solutions
of \eqref{eq:repinspace_with_a} with biologically admissible initial data
converge to \(z=0\). In particular, there can be no nontrivial biologically
admissible steady state for \(a<a_{\mathrm{crit}}\).

Therefore \(\mathcal C\) is unbounded, its \(Y\)-norm cannot become unbounded
for bounded \(a\), and it cannot extend to \(a\to-\infty\). The only remaining
possibility is that
\[
\sup\{a:(a,z)\in\mathcal C\}=+\infty .
\]

\end{proof}

\subsection{Local bifurcation and direction of branching}
\label{subsec:local_bifurcation}

We now describe the local structure of the bifurcating branch
near the bifurcation point $(a_0,0)$. 

\begin{proposition}[Local parametrisation of the bifurcating branch]
\label{prop:local_parametrisation}
Let $\sigma_0(\mathcal L_{a_0})=0$ be the principal eigenvalue of
\eqref{eq:eigen_La} and let $\psi_0>0$ be the associated eigenfunction.
Let $Z \subset W^{2,p}(\Omega)$ be any closed complement of
$\operatorname{span}\{\psi_0\}$. Then there exists a neighbourhood of
$(a_0,0)$ in $\mathbb R\times W^{2,p}(\Omega)$ in which the set of
nontrivial solutions of \eqref{eq:steady_eq} consists of a $C^1$
curve
\[
(a(s),z(s)) = \bigl(a(s),\, s\psi_0 + s\phi(s)\bigr),
\]
where $s$ varies in a neighbourhood of $0$, $a_0=a(0)$,
$\phi(0)=0$, and $\phi(s)\in Z$.
\end{proposition}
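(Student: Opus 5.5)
This is the Crandall--Rabinowitz theorem on bifurcation from a simple eigenvalue, applied after dividing out the trivial branch. I would set
\[
F(a,z):=\mathcal L_a z+g(x,z,a)-a\lambda_1^2(x)z ,
\]
which is just the left-hand side of \eqref{eq:steady_eq}. Explicitly, $F(a,z)=\mathcal L_a z+N(x,a)z^2$ with the pure quadratic coefficient
\[
N(x,a):=-\bigl(2a\lambda_1^2+\lambda_2^1\bigr)+\bigl(a\lambda_1^2+\lambda_2^1\bigr)z ,
\]
once the linear part $a\lambda_1^2 z$ has been absorbed into $\mathcal L_a$. I would regard $F$ as a map $\mathbb R\times X\to L^p(\Omega)$, where $X:=\{z\in W^{2,p}(\Omega):\partial_{\vec n}z=0\}$ and $p>n$. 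Since $W^{2,p}\hookrightarrow C^1(\overline\Omega)$ is an algebra embedding, the polynomial nonlinearity is smooth. By \ref{H:lambda}, $F$ is $C^\infty$ (in fact polynomial in $(a,z)$), and $F(a,0)=0$ for all $a$.

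\textbf{Step 1: the three Crandall--Rabinowitz hypotheses at $(a_0,0)$.}
\begin{enumerate}[label=(\alph*)]
\item We have $D_zF(a_0,0)=\mathcal L_{a_0}$. By Proposition~\ref{prop:principal_eigen_La} and $\sigma_0(\mathcal L_{a_0})=0$, its kernel is $\operatorname{span}\{\psi_0\}$, since the principal eigenvalue is algebraically simple.
\item The range of $\mathcal L_{a_0}$ is closed with codimension one. Indeed, $\mathcal L_{a_0}:X\to L^p$ is Fredholm of index zero, being a compact perturbation of the invertible operator $-A$ from the preliminaries. By the Fredholm alternative the range is characterised as
\[
\operatorname{Ran}\mathcal L_{a_0}=\Bigl\{f:\int_\Omega f\,\psi_0^*\,dx=0\Bigr\},
\]
where $\psi_0^*>0$ is the principal eigenfunction of the adjoint $\mathcal L_{a_0}^*$ from \eqref{eq:La_adjoing}.
\item Transversality: $D_{az}F(a_0,0)\psi_0=\lambda_1^2\psi_0$ must not lie in the range. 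This is exactly the positivity
\[
\int_\Omega\lambda_1^2\,\psi_0\,\psi_0^*\,dx>0 ,
\]
which holds by \ref{H:lambda2} and the strict positivity of $\psi_0,\psi_0^*$. This is the same computation as in part (ii) of Theorem~\ref{thm:thm2}.
\end{enumerate}

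\textbf{Step 2: reduction along the trivial branch.} I would make the standard reduction explicitly, since it also yields $\phi(0)=0$. Write $z=s\psi_0+s\phi$ with $\phi\in Z$, and define
\[
G(s,a,\phi):=s^{-1}F\bigl(a,s(\psi_0+\phi)\bigr)\quad (s\neq0),\qquad G(0,a,\phi):=\mathcal L_a(\psi_0+\phi).
\]
Because the nonlinearity is polynomial, $G=\mathcal L_a(\psi_0+\phi)+s\,(\cdots)$ is $C^1$ on $\mathbb R\times\mathbb R\times Z$. At $(0,a_0,0)$ we have $G=0$, and
\[
D_{(a,\phi)}G(0,a_0,0)[\alpha,\eta]=\alpha\lambda_1^2\psi_0+\mathcal L_{a_0}\eta .
\]
This map $\mathbb R\times Z\to L^p$ is an isomorphism. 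Injectivity follows from $\ker\mathcal L_{a_0}\cap Z=\{0\}$ together with transversality. Surjectivity follows because $\mathcal L_{a_0}(Z)=\operatorname{Ran}\mathcal L_{a_0}$ has codimension one and is complemented by $\lambda_1^2\psi_0$. The implicit function theorem then gives $C^1$ functions $a(s)$ and $\phi(s)$ with $a(0)=a_0$ and $\phi(0)=0$.

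\textbf{Step 3: no other nontrivial solutions nearby.} Let $(a,z)$ be a nontrivial solution close to $(a_0,0)$. Decompose $z=s\psi_0+w$ with $w\in Z$; write $w=s\phi$ when $s\neq0$. The delicate case is excluding solutions with $s$ small but $w$ not small relative to $s$, including $s=0$. For this I would project $F(a,z)=0$ onto $\operatorname{Ran}\mathcal L_{a_0}$. This gives
\[
\mathcal L_{a_0}w=O\bigl(|a-a_0|\,\|z\|\bigr)+O\bigl(\|z\|^2\bigr).
\]
Invertibility of $\mathcal L_{a_0}$ on $Z$ then yields $\|w\|=o(\|z\|)$, hence $\|w\|=o(|s|)$. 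So $s\neq0$ and $\phi=w/s$ is small, which places $(s,a,\phi)$ in the uniqueness neighbourhood of the implicit function theorem.

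I expect Step~3 to be the main obstacle, along with the Fredholm characterisation of the range in the non-self-adjoint setting where advection is present. I would handle the latter by citing the Krein--Rutman adjoint theory (Theorem~2.12 of \cite{Cantrell2003}) together with the compactness of $A^{-1}$ already established. Finally, the neighbourhood obtained in $\mathbb R\times X$ is also one in $\mathbb R\times W^{2,p}(\Omega)$ restricted to Neumann functions, as the statement requires.
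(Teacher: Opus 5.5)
Your proposal is correct and is essentially the paper's argument. The paper's proof is a one-line appeal to Proposition~3.10 of \cite{Cantrell2003}, i.e.\ the Crandall--Rabinowitz theorem for bifurcation from a simple eigenvalue. Your Steps~1--3 verify its hypotheses: the kernel is $\operatorname{span}\{\psi_0\}$, the range has codimension one and is annihilated by $\psi_0^*$, and the transversality condition $\int_\Omega\lambda_1^2\psi_0\psi_0^*\,dx>0$ holds, which is exactly what the paper's requirement that $\mu_0=a_0-c$ be a simple characteristic value of $L$ encodes. Steps~2--3 then reproduce that theorem's standard proof.

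The only slips are cosmetic:
\begin{itemize}
\item Your first expression for $F$ is garbled. With the paper's displayed $g$ it should read $\mathcal L_a z+z\,g(x,z,a)$, which is what your explicit cubic formula gives; also, your $N$ depends on $z$.
\item You should work with $Z\cap X$ rather than $Z$, so that $\phi(s)$ satisfies the Neumann condition.
\end{itemize}
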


\begin{proof}
Application of Proposition 3.10 in \cite{Cantrell2003}.
\end{proof}

Thus, near $(a_0,0)$, the solution set consists of the trivial branch $(a,0)$
and a smooth curve of nontrivial solutions bifurcating from it.

For the slope computation, we take
\[
Z=\left\{\phi\in W^{2,p}(\Omega):
\int_\Omega \psi_0\,\phi\,dx = 0\right\},
\]
the $L^2$–orthogonal complement of $\operatorname{span}\{\psi_0\}$.
Then, for $|s|<\delta$ small,
\begin{equation}
\label{eq:localsol}
(a(s),z(s)) = \bigl(a(s),\, s\psi_0 + s\phi(s)\bigr).
\end{equation}

The direction of the bifurcation is determined by the local slope
$a_s(0)$ at $(a_0,0)$. If the bifurcating branch bends toward
smaller values of $a$, the bifurcation is backward.
In this case, positive steady states exist for some $a<a_0$,
yielding coexistence in parameter regimes where the homogeneous
replicator dynamics predict competitive exclusion.
The next result provides an explicit formula for the slope $a_s(0)$
in terms of the principal eigenfunctions of $\mathcal L_{a_0}$
and its adjoint $\mathcal L_{a_0}^*$.

\begin{theorem}[Direction of bifurcation]
\label{thm:local_direction}
Let $\sigma_0(\mathcal L_{a_0})=0$ be simple and let
$\psi_0>0$ be the associated eigenfunction.
Let $(a(s),z(s))$ be the local branch described in
Proposition~\ref{prop:local_parametrisation}.
Then

\begin{equation}
    a_s(0) = \frac{\int_\Omega \lambda_2^1 \psi_0^2\psi_0^* dx}{\int_\Omega \lambda_1^2  \psi_0\psi_0^* dx}.
\end{equation}

In particular, if $a_s(0)<0$, the bifurcation is backward and
positive, coexistence steady states exist for $a<a_0$.
\end{theorem}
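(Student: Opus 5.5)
The plan is to substitute the local parametrisation of Proposition~\ref{prop:local_parametrisation} into the steady-state equation \eqref{eq:steady_eq}, expand in $s$, and eliminate the unknown correction $\phi$ with the Fredholm alternative. The pairing is with the principal eigenfunction $\psi_0^*>0$ of the adjoint $\mathcal L_{a_0}^*$ from \eqref{eq:La_adjoing}, which exists by Proposition~\ref{prop:principal_eigen_La} applied to the adjoint problem.

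\textbf{Step 1: expansion.} I write the reaction term in the form used in the proof of Lemma~\ref{lemma:tours},
\[
z(1-z)\bigl(a\lambda_1^2(1-z)-\lambda_2^1 z\bigr),
\]
so the steady-state equation reads
\[
0=\nabla\cdot(D\nabla z)+\vec\nu\cdot\nabla z+a\lambda_1^2 z\,(1-z)^2-\lambda_2^1 z^2(1-z).
\]
Insert $z(s)=s\psi_0+s\phi(s)$ and $a(s)=a_0+s\,a_s(0)+o(s)$, with $\phi(0)=0$. Since $(a,z)\mapsto$ (equation) is $C^1$ from $\mathbb R\times W^{2,p}$ to $L^p$, and $s\mapsto(a(s),\phi(s))$ is $C^1$, I divide by $s$ and keep terms through order $s$. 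This gives
\[
0=\mathcal L_{a_0}\psi_0+s\Big[\mathcal L_{a_0}\phi_s(0)+a_s(0)\lambda_1^2\psi_0-\big(\text{quadratic reaction coefficient}\big)\psi_0^2\Big]+o(s)
\]
in $L^p(\Omega)$. The leading term vanishes because $\sigma_0(\mathcal L_{a_0})=0$.

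\textbf{Step 2: projection.} I take the $L^2$ pairing of the $O(s)$ bracket with $\psi_0^*$. The term $\langle\mathcal L_{a_0}\phi_s(0),\psi_0^*\rangle=\langle\phi_s(0),\mathcal L_{a_0}^*\psi_0^*\rangle$ vanishes. Hence
\[
a_s(0)\int_\Omega\lambda_1^2\psi_0\psi_0^*\,dx=\int_\Omega(\text{quadratic coefficient})\,\psi_0^2\psi_0^*\,dx .
\]
The denominator is strictly positive by \ref{H:lambda2} and positivity of $\psi_0,\psi_0^*$; this is the same argument as in part (ii) of Theorem~\ref{thm:thm2}. So I can divide to obtain the formula.

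\textbf{Step 3: identifying the numerator.} This is the main obstacle. The $z^2$ contributions must be collected so that what multiplies $\psi_0^2\psi_0^*$ is exactly $\lambda_2^1$, as in the statement. That is, the $a\lambda_1^2$-dependent quadratic part has to be accounted for through the $a$-variation term rather than left in the numerator. I would first try the grouping $z(1-z)(a\lambda_1^2(1-z)-\lambda_2^1z)$. In this form, the factor $a\lambda_1^2$ multiplies $z(1-z)^2$, and I would pair that term together with the $a_s(0)\lambda_1^2$ contribution.

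I must verify carefully whether this contribution is of order $s^2$ after division, or whether it instead adds $2a_0\int\lambda_1^2\psi_0^2\psi_0^*$ to the numerator. Only in the first case does the formula hold exactly as stated; this bookkeeping is where the argument can fail. A useful check is the special case $a_0=0$, which would require $\rho_1^2$-type balance with $\lambda_1^2\ge0$, $\not\equiv0$; there the statement's formula and the direct expansion must agree.

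\textbf{Step 4: conclusion.} Once the formula is in hand, the last claim follows immediately. If $a_s(0)<0$, then for small $s>0$ one has $z(s)=s(\psi_0+\phi(s))>0$, since $\phi(s)\to0$ in $C^1$, while $a(s)<a_0$. This yields positive coexistence steady states for $a<a_0$, and by Theorem~\ref{thm:thm2}(i) they satisfy $0\le z\le1$.
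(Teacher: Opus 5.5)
Your route is the paper's: expand the steady-state equation along the branch, then pair the $O(s)$ equation with the adjoint principal eigenfunction $\psi_0^*$ so that the $\mathcal L_{a_0}\phi_s(0)$ term drops out. The gap is Step~3. You leave it undecided, and the fix you propose, regrouping so that the $a\lambda_1^2$ quadratic part becomes $O(s^2)$ or is absorbed into the $a$-variation term, cannot work. Factoring the polynomial differently does not change its Taylor coefficients. At $a=a_0$ the $z^2$ coefficient of $z(1-z)\bigl(a\lambda_1^2(1-z)-\lambda_2^1 z\bigr)$ is $-(2a_0\lambda_1^2+\lambda_2^1)$. So after dividing by $s$, the term $-2a_0\lambda_1^2\psi_0^2$ sits in the $O(s)$ bracket, and your projection gives
\[
a_s(0)=\frac{\int_\Omega (2a_0\lambda_1^2+\lambda_2^1)\,\psi_0^2\psi_0^*\,dx}{\int_\Omega \lambda_1^2\psi_0\psi_0^*\,dx}.
\]
This is the same identity the paper reaches in \eqref{eq:close_Dx}. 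As written, your argument therefore does not establish the stated formula, and Step~3 even allows that it might fail.

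The missing idea is that $a_0=0$ is forced, not a special case. The operator $\mathcal L_0=\nabla\cdot(D\nabla\,\cdot\,)+\vec\nu\cdot\nabla$ has no zeroth-order term, so under the no-flux condition $\mathcal L_0 1=0$. Since the constant $1$ is a positive eigenfunction, Proposition~\ref{prop:principal_eigen_La} gives $\sigma_0(\mathcal L_0)=0$. The uniqueness argument of Theorem~\ref{thm:thm2}(ii) then gives $a_0=0$; equivalently, $\sigma_0(\mathcal L_a)$ is strictly increasing in $a$ because $\lambda_1^2\ge 0$, $\lambda_1^2\not\equiv 0$. No ``balance'' condition on $\rho_1^2$ is involved. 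With $a_0=0$ the extra term vanishes and the stated formula follows, which is exactly how the paper concludes ("As we are considering Neumann boundary conditions, $a_0=0$"). Incidentally, $\psi_0$ is then constant. Once this is inserted, your Step~4 is fine.
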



\begin{proof}
Substituting the local parametrization \eqref{eq:localsol} into
\eqref{eq:repinspace_with_a} and differentiating with respect to \(s\),
we obtain
\begin{equation}
\label{eq:first_s_derivative_Dx}
\begin{cases}
\nabla \cdot\!\Big(D(x)\nabla z_s\Big)
+ \vec{\nu}\cdot \nabla z_s
+ (z g)_s = 0,\\[0.2cm]
\displaystyle \frac{\partial z_s}{\partial \vec{n}}=0.
\end{cases}
\end{equation}

Differentiating again with respect to \(s\) yields
\begin{equation}
\label{eq:second_s_derivative_Dx}
\begin{split}
\nabla \cdot\!\Big(D(x)\nabla z_{ss}\Big)
+ \vec{\nu}\cdot \nabla z_{ss}
+ (z g)_{ss} = 0 .
\end{split}
\end{equation}

To compute the last term, write
\[
zg
=
z(1-z)\left(a\lambda_1^2(1-z)-\lambda_2^1 z\right).
\]
Then, at \(s=0\), using
\[
z(0)=0,\qquad z_s(0)=\psi_0,\qquad z_{ss}(0)=2\phi_s(0),
\qquad a_0=a_0,
\]
we obtain
\begin{equation}
\label{eq:zgss_at_zero_Dx}
(zg)_{ss}\big|_{s=0}
=
a_0\lambda_1^2 z_{ss}(0)
+
2a_s(0)\lambda_1^2\psi_0
-
\left(4a_0\lambda_1^2+2\lambda_2^1\right)\psi_0^2 .
\end{equation}
Therefore, setting \(s=0\) in \eqref{eq:second_s_derivative_Dx}, we obtain
\begin{equation}
\label{eq:second_derivative_at_zero_Dx}
\nabla \cdot\!\Big(D(x)\nabla z_{ss}(0)\Big)
+ \vec{\nu}\cdot \nabla z_{ss}(0)
+ a_0\lambda_1^2 z_{ss}(0)
+ 2a_s(0)\lambda_1^2\psi_0
-
\left(4a_0\lambda_1^2+2\lambda_2^1\right)\psi_0^2
=0 .
\end{equation}

Now, let \(\psi_0^*>0\) be a principal eigenfunction of the adjoint operator
\(\mathcal L_{a_0}^*\) corresponding to the eigenvalue \(0\):
\begin{equation}
\begin{cases}
\nabla \cdot\!\big(D(x) \nabla \psi_0^* \big)
- \nabla \cdot\!\big(\vec{\nu}(x)\psi_0^*\big)
+ a_0 \lambda_1^2(x)\, \psi_0^* = 0,
& x \in \Omega,\\[0.2cm]
\displaystyle \frac{\partial \psi_0^*}{\partial \vec{n}} = 0,
& x \in \partial \Omega.
\end{cases}
\label{eq:eigenproblemadjoint_Dx}
\end{equation}

Multiplying \eqref{eq:second_derivative_at_zero_Dx} by \(\psi_0^*\) and
integrating over \(\Omega\), and applying integration by parts, we obtain
\begin{align}
&\int_\Omega z_{ss}(0)
\Big[
\nabla \cdot\!\big(D(x)\nabla \psi_0^*\big)
- \nabla \cdot\!\big(\vec{\nu}\psi_0^*\big)
+ a_0\lambda_1^2 \psi_0^*
\Big]dx \notag\\
&\qquad
+ 2 a_s(0)\int_\Omega \lambda_1^2\,\psi_0\psi_0^*\,dx
-
\int_\Omega
\left(4a_0\lambda_1^2+2\lambda_2^1\right)
\psi_0^2\psi_0^*\,dx
=0.
\end{align}

By \eqref{eq:eigenproblemadjoint_Dx}, the bracketed term multiplying
\(z_{ss}(0)\) vanishes. Hence
\begin{equation}
\label{eq:close_Dx}
2 a_s(0)\int_\Omega \lambda_1^2\,\psi_0\psi_0^*\,dx
=
\int_\Omega
\left(4a_0\lambda_1^2+2\lambda_2^1\right)
\psi_0^2\psi_0^*\,dx .
\end{equation}
As we are considering Neumann boundary conditions, $a_0=0$, and we obtain
\begin{equation}
\label{eq:as0_formula_Dx}
a_s(0)
=
\frac{
\displaystyle
\int_\Omega
\lambda_2^1
\psi_0^2\psi_0^*\,dx
}{
\displaystyle
\int_\Omega \lambda_1^2\,\psi_0\psi_0^*\,dx
}.
\end{equation}
\end{proof}

We note one simplification and one extension of this expression:

\begin{itemize}
\item If $\vec{\nu}(x)=0$, then the eigenvalue problem is self-adjoint, so
$\psi_0=\psi_0^*$, and
\begin{equation}
\label{eq:backconditionsimpler}
a_s(0)
=
\frac{
\int_\Omega \lambda_2^1 \psi_0^3\,dx
}{\int_\Omega \lambda_1^2\,\psi_0^2\,dx}.
\end{equation}
\item If $D=D(x,z)$, similar computations yield:
\begin{equation}
    a_s(0) = \frac{\int_\Omega D_z(x,0) \psi_0 \nabla \psi_0 \cdot \nabla \psi_0^* + \int_\Omega \lambda_2^1 \psi_0^2\psi_0^* dx}{\int_\Omega \lambda_1^2  \psi_0\psi_0^* dx},
\end{equation}
lthough more regularity in the coefficients is required. The computations are provided in the SI Text \ref{SI:bifurcationdz}.
\end{itemize}

If $a_s(0)>0$, the positive branch $\mathcal C^+$
emerges toward larger values of $a$, and the local dynamics
near $(a_0,0)$ are consistent with the classical
competitive exclusion picture.

Conversely, if $a_s(0)<0$, the branch bends toward smaller parameter values, and the bifurcation is backward. Since the denominator in the expression for \(a_s(0)\) is positive, this corresponds to a negative weighted average of \(\lambda_2^1\), with weights given by the principal eigenfunction and its adjoint. In this case, positive steady states exist for some $a<a_0$. Combined with the global result of Theorem~\ref{thm:thm2}, this implies the existence of coexistence states in parts of Quadrants II and IV of the $(\rho_1^2,\rho_2^1)$ plane, in contrast with the homogeneous replicator dynamics described in Proposition~\ref{prop:constant_lambda_main}.

\section{Examples and insights on coexistence and invasibility}

We now illustrate several mechanistic features of the spatial replicator system that have direct ecological interpretation. We examine how  diffusion, advection, and spatial heterogeneity shape both coexistence and resistance to invasion.

\subsection{Backward bifurcation and invasion resistance}

Figure~\ref{fig:backwardexample}A shows a representative backward bifurcation diagram obtained for
$\lambda_1^2(x)=1$ and $\lambda_2^1(x)=0.5-2x$, with diffusion coefficient
$D=0.01$, and no advection. When $a<0$, the system is in Quadrant II of the
$(\rho_1^2,\rho_2^1)$ plane. We observe the emergence of a stable coexistence
branch that extends into the region where the well-mixed dynamics predict
competitive exclusion.

\begin{figure}[htbp]
    \centering
    \includegraphics[width=.99\textwidth]{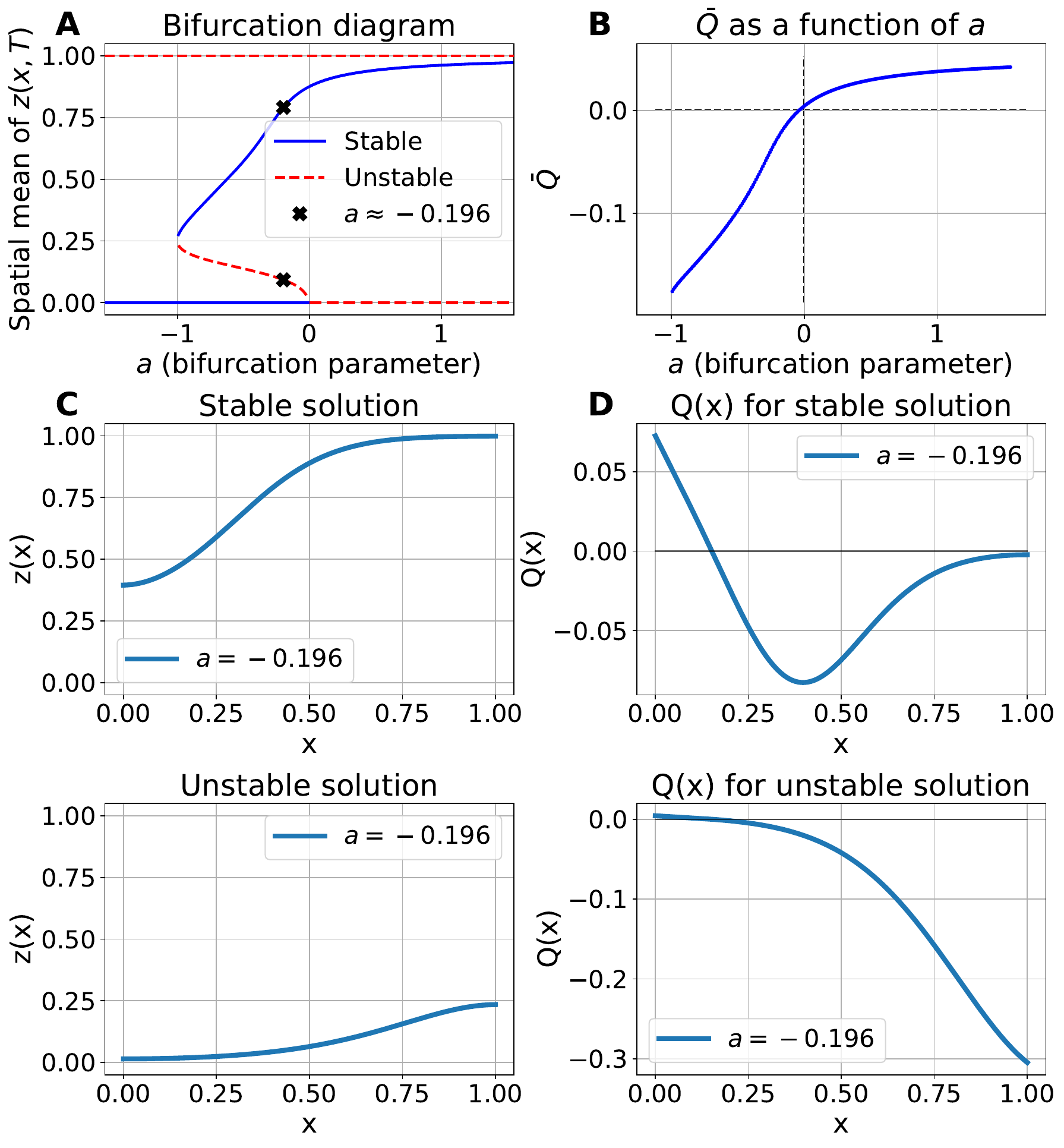}
    \caption{\textbf{(A)} Backward Bifurcation diagram for $\lambda_1^2(x)=1$, $\lambda_2^1(x)=0.5-2x$, and $D=0.01$. Red indicates the unstable steady states, while blue represents the stable ones. We plot the mean abundance of $z$ as a function of the bifurcation parameter $a$. These results were obtained by simulating the system until $T=1000$. \textbf{(B)} Mean system invasion resistance to invasion, $\bar{Q}$, as a function of $a$. \textbf{(C)} Plot of the stable and unstable of the solution for $a\approx -0.2$. \textbf{(D)} Plot of the corresponding invasion resistance for the stable and unstable solution.}
    \label{fig:backwardexample}
\end{figure}

\subsubsection*{Invasion resistance to a third species}

Although species~1 and species~2 coexist along the stable branch, we find an interesting phenomenon when considering invasion by a third species. Real ecosystems are rarely closed, and even a stable resident state can be vulnerable to the introduction of an additional species. In the spatial model, this is even more important, and we find the existence of stable coexistence states whose mean resistance to invasion can be negative.

To make this explicit, we consider a third species, initially rare, trying to
invade the resident state
\[
    \mathbf z = (z,1-z,0).
\]
The initial growth rate of this third species depends on its direct interactions
with each resident species, given by $\lambda_3^1(x)$ and $\lambda_3^2(x)$, but
also on the invasion resistance generated by the resident pair. In the
two-species case, this resistance is
\begin{equation}
    Q_a(\mathbf z,x)
    =
    \bigl(a\lambda_1^2(x)+\lambda_2^1(x)\bigr)z(1-z).
\end{equation}
Therefore, the local growth rate of the rare third species is
\begin{equation}
    r_3(\mathbf z,x)
    =
    z\lambda_3^1(x)
    +
    (1-z)\lambda_3^2(x)
    -
    Q_a(\mathbf z,x).
\end{equation}

The important point is that $Q_a$ is generated by the resident two-species
system. It is not a property of the invader itself. Positive values of $Q_a$
reduce the growth rate of a rare third species, while negative values of $Q_a$
make invasion easier. Thus, the same coexistence state can be internally stable
with respect to species~1 and species~2, but still be susceptible to invasion by
a third species.

To quantify this effect along the coexistence branch, we compute the mean
invasion resistance
\begin{equation}
    \overline Q_a
    =
    \frac{1}{|\Omega|}
    \int_\Omega
    \bigl(a\lambda_1^2(x)+\lambda_2^1(x)\bigr)z(1-z)\,dx,
\end{equation}
as a function of the bifurcation parameter $a$
(Figure~\ref{fig:backwardexample}B).

Despite the persistence of dynamically stable coexistence over a wide range of
$a$, we find that $\overline Q_a$ can become negative for some negative values
of $a$. In other words, backward bifurcation does not only create a coexistence
state between species~1 and species~2. It can also create resident communities
that are stable internally, but that facilitate invasion by an additional
species.

This contrasts with the homogeneous model, where stability of the resident
equilibrium and resistance to invasion are more tightly linked. In the spatial
model, these two notions can be separated.

On the other hand, for non-negative values of $a$, we can show analytically that
$\overline Q_a \geq 0$ in the absence of advection. Numerical exploration further suggests that this remains true when advection is present.

Let
\begin{align}
Q_{a=0}
:= \int_\Omega \lambda_2^1(x)\, z^*(1-z^*)\,dx.
\end{align}

Using the steady-state equation and integrating by parts yields
\begin{align}
\int_\Omega \lambda_2^1(x)\, z^*(1-z^*)\,dx
= \int_\Omega D \frac{|\nabla z^*|^2}{(z^*)^2}\,dx
+ D \int_{\partial\Omega} \frac{\nabla z^*}{z^*}.
\end{align}

Under no-flux boundary conditions, the boundary term vanishes, and therefore
\begin{align}
Q_{a=0} \geq 0.
\end{align}

\subsection{Effect of diffusion}
As diffusion increases, the system becomes progressively more well-mixed. Using the same conditions as in Figure~\ref{fig:backwardexample}, we observe that the backward bifurcation branch shrinks as diffusion grows. For sufficiently large diffusion, the system transitions to a multistable regime in which both $z=0$ and $z=1$ are stable, and the coexistence branch disappears, Figure~\ref{fig:effectsofdiffusion}.

This transition occurs because for $D=0.04$, the principal eigenvalue $\rho_2^1$ becomes negative (while $\rho_1^2$ remains negative for negative values of $a$), so we are in the multistable case, with atractors on the boundary.

\begin{figure}[htbp]
    \centering
    \includegraphics[width=0.7\linewidth]{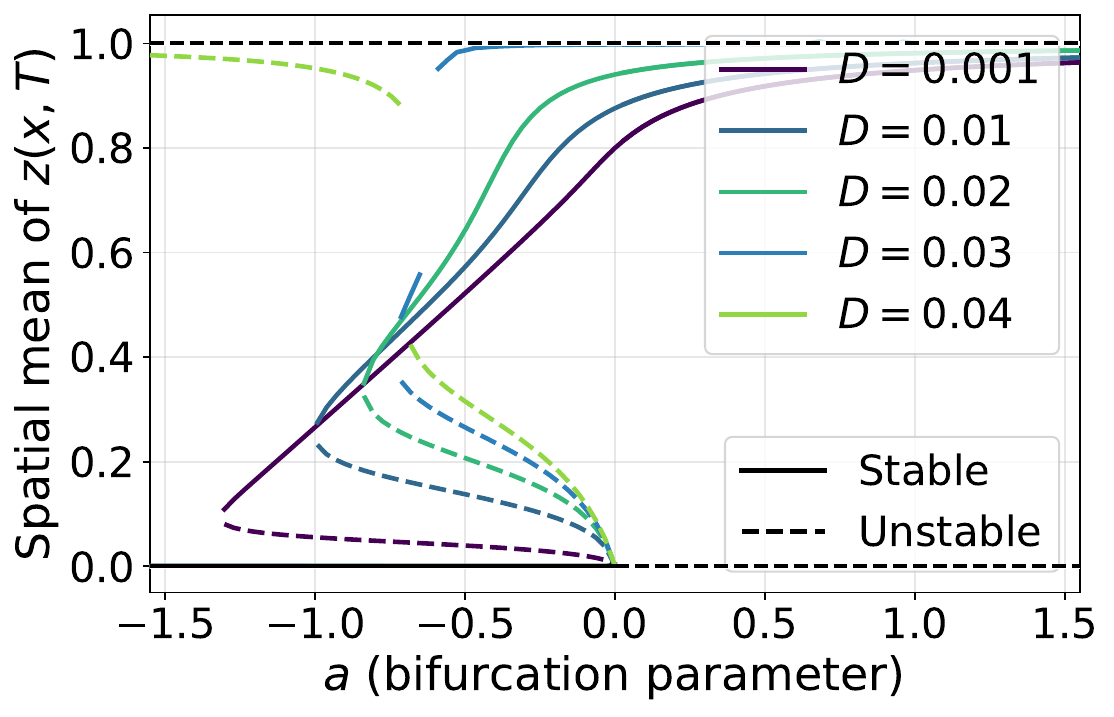}
    \caption{Effects of diffusion. Bifurcation branches for D=0.001, D=0.01, D=0.02, D=0.03, D=0.04. Other parameters as in Figure \ref{fig:backwardexample}.}
    \label{fig:effectsofdiffusion}
\end{figure}

\subsection{Effect of advection}

Advection affects the system differently from diffusion. In particular, its impact on the bifurcation structure is non-monotonic: depending on its magnitude, advection can either induce or eliminate a backward bifurcation. 

For example, using a different set of $\lambda_i^j$, we find that a backward bifurcation emerges at intermediate levels of advection, even though it is absent for both lower and higher values, Figure~\ref{fig:effectsofadvection}. We can observe three different scenarios: for $\nu=0$, $z=0$ and $z=1$ are stable steady states, and there is no stable coexistence branch. For $\nu=-0.05$, a backward bifurcation emerges, and a stable coexistence branch appears for negative values of $a$. Finally, for $\nu=-0.1$, the backward bifurcation disappears again, and is replaced by a forward bifurcation.

\begin{figure}[htbp]
    \centering
    \includegraphics[width=0.7\linewidth]{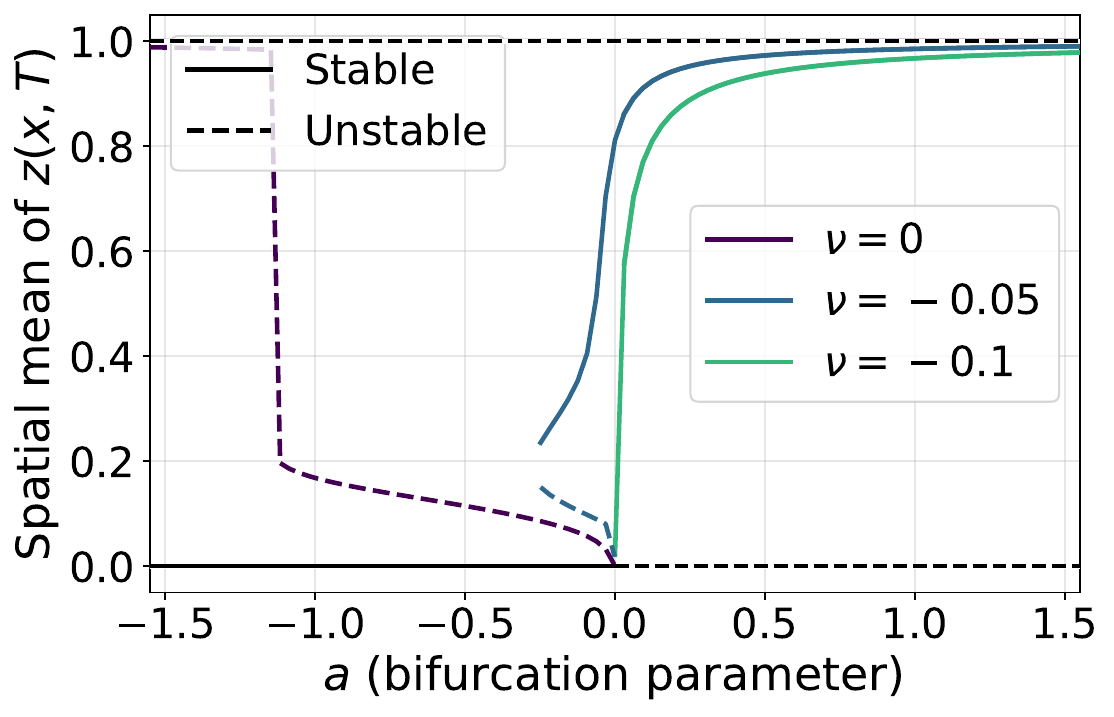}
    \caption{Effects of advection: Backward Bifurcation diagram for $\lambda_1^2(x)=1$, $\lambda_2^1(x)=0.1 - 2 x^2$, and $D=0.01$ for different advection coefficients: $\vec{\nu}=0$, $\vec{\nu}=-0.05$, $\vec{\nu}=-0.1$.}
    \label{fig:effectsofadvection}
\end{figure}

\subsection{Example of regions in the \((\rho_1^2,\rho_2^1)\) plane}

So far, we have seen when backward bifurcations arise and the types of coexistence regimes they generate. However, it is still not entirely clear how these regimes are reflected in the \((\rho_1^2,\rho_2^1)\) plane.

In an attempt to clarify this, we explore the \((\rho_1^2,\rho_2^1)\) plane across a family of examples obtained by varying two parameters (\(a,b\)). We take
\[
\lambda_1^2(x)=a,
\qquad
\lambda_2^1(x)=b-2x.
\]
For \(b=0.5\), this recovers the setting of Figure~\ref{fig:backwardexample}. For each value of \(b\), we compute the smallest value \(a_{\mathrm{crit}}(b)\) for which coexistence of both species is possible. Since \(\lambda_1^2(x)=a\) is constant in space, we have \(\rho_1^2=a\), and therefore the curve
\[
\big(a_{\mathrm{crit}}(b),\rho_2^1(b)\big)
\]
can be represented directly in the \((\rho_1^2,\rho_2^1)\) plane.

Figure~\ref{fig:planefinal2} shows one representative example for these parameters, with \(D=0.01\) and \(\vec{\nu}=0\). In the second quadrant, we can see an added region of coexistence, region III', extending the multistability of Quadrant III into Quadrant II. For the other quadrants, \(a_{\mathrm{crit}} = 0\) in Quadrants I or IV and \(a_{\mathrm{crit}} = -\infty\) in Quadrant III.


\begin{figure}[H]
    \centering
    \includegraphics[width=0.6\linewidth]{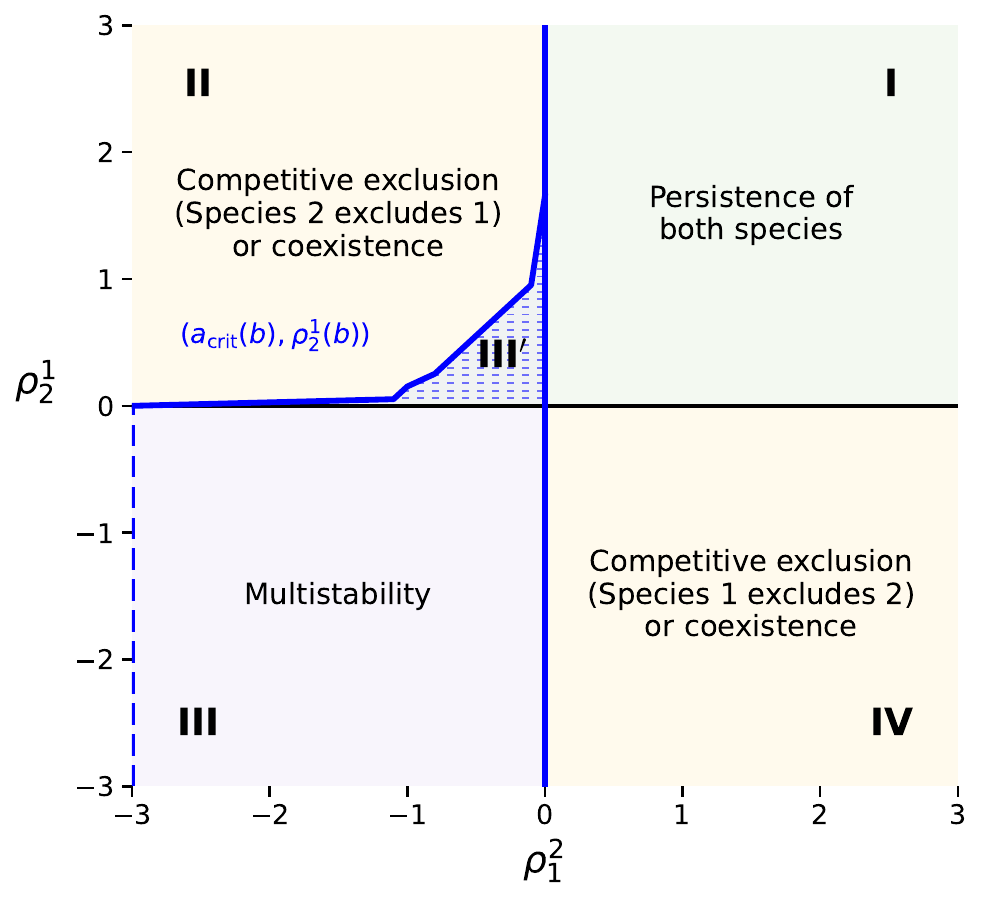}
    \caption{Parametric plot of \(\big(a_{\mathrm{crit}}(b),\rho_2^1(b)\big)\) in the \((\rho_1^2,\rho_2^1)\) plane for \(\lambda_1^2(x)=a\) and \(\lambda_2^1(x)=b-2x\), with \(D=0.01\) and \(\vec{\nu}=0\). For each value of \(b\), \(a_{\mathrm{crit}}(b)\) denotes the minimum value of \(a\) for which coexistence of both species is possible. Since \(\lambda_1^2(x)=a\) is constant in space, we have \(\rho_1^2=a\). Region III' is the additional multistability region in quadrant II, where coexistence is possible due to a backward bifurcation.}
    \label{fig:planefinal2}
\end{figure}

\subsection{Connection with spatial SIS coinfection models}

The examples above were written directly at the level of the spatial replicator equation. We now briefly show that this structure can also arise from a more classical epidemiological model. In particular, the slow-diffusion reduction of the spatial SIS coinfection model studied in \cite{Le2023} leads to a reaction-advection-diffusion replicator equation for the relative abundance of each strain, where the effective interaction coefficients \(\lambda_i^j(x)\) are determined by the strain-specific SIS parameters.

Figure~\ref{fig:sis-derived-replicator} gives one such example. The coefficients are obtained from spatially varying transmission rates and co-colonization clearance rates in the underlying SIS model. For the resulting spatial replicator equation, the principal eigenvalues are
\[
\rho_1^2 \simeq -4.1\times 10^{-3},
\qquad
\rho_2^1 \simeq 5.3\times 10^{-4}.
\]
Thus, the system lies in Quadrant II of the \((\rho_1^2,\rho_2^1)\) plane. In the corresponding non-spatial model, this sign structure would lead to exclusion of strain~1. Nevertheless, the simulation in Figure~\ref{fig:sis-derived-replicator}F shows that a stable spatial steady state with species~1 can exist, for certain initial conditions. This illustrates that the phenomena described above are not only consequences of considering abstract spatial interaction functions, but can also appear in standard SIS-type population dynamics in heterogeneous space.

\begin{figure}[htbp]
    \centering
    \includegraphics[width=0.99\linewidth]{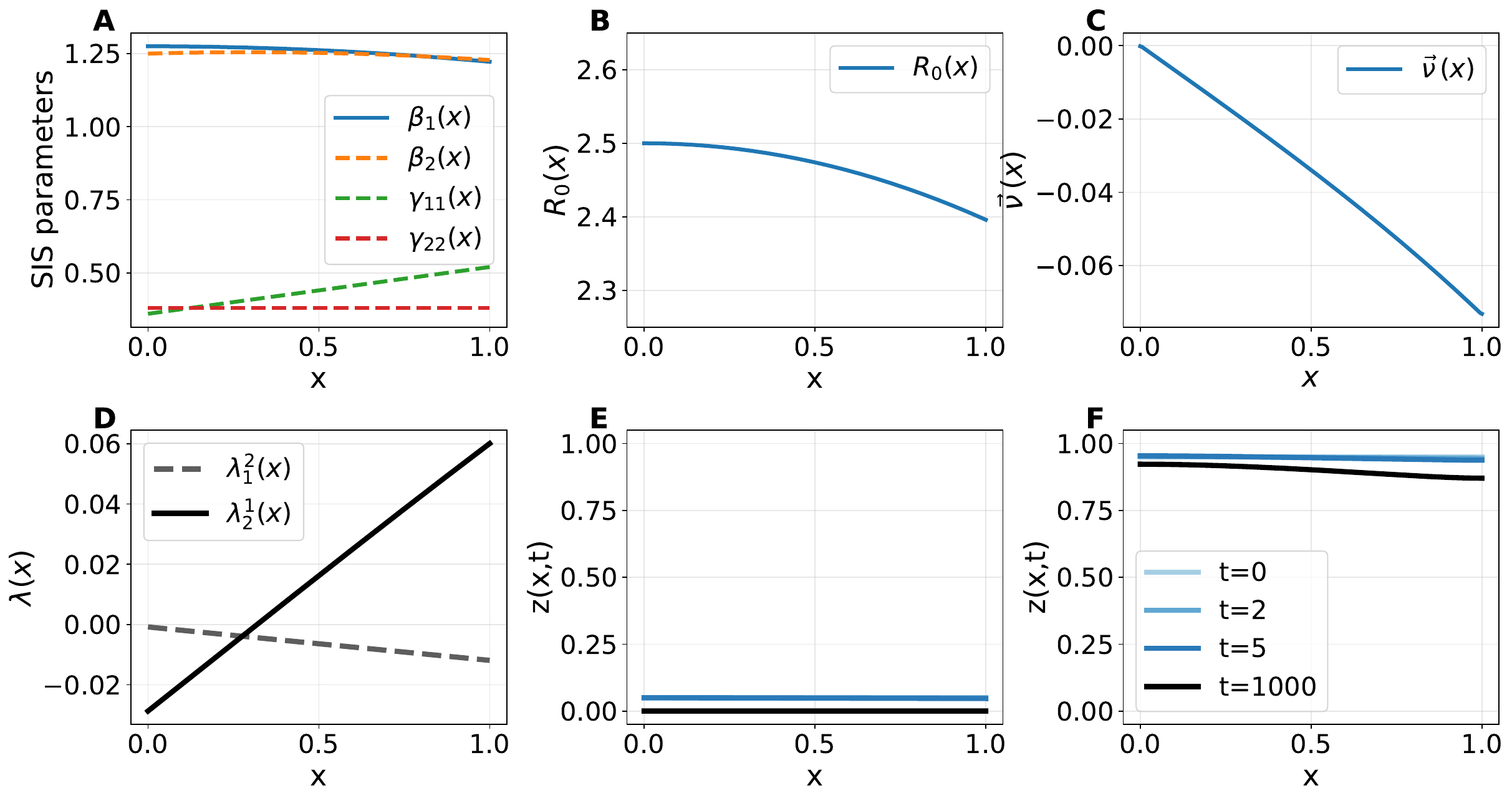}
    \caption{\textbf{A spatial replicator example generated from a classical SIS coinfection model.}
The interaction coefficients \(\lambda_i^j\) are obtained from the slow-diffusion reduction of a spatial SIS coinfection model. The diffusion term represents random spatial movement of hosts in the underlying SIS model, whereas the effective advection is generated by spatial gradients in the neutral endemic state, induced here by variation in \(R_0(x)\). For the resulting spatial replicator equation, the principal eigenvalues are \(\rho_1^2\simeq -4.1\times 10^{-3}\) and \(\rho_2^1\simeq 5.3\times 10^{-4}\). Thus, the system lies in Quadrant II of the \((\rho_1^2,\rho_2^1)\) plane, where the well mixed prediction would be exclusion of strain~1. Nevertheless, panel~F shows that such a stable spatial steady state can exist. We use \(r(x)=0.1\), \(\gamma(x)=0.4\), and \(k(x)=0.8\), and set \(R_0(x)=2.5-0.104x^2\), with \(\beta(x)=R_0(x)(r(x)+\gamma(x))\). Following the quasi-neutral parametrization, the strain-specific transmission and co-colonization clearance rates are defined by \(\beta_i(x)=\beta(x)\bigl(1+\epsilon b_i(x)\bigr)\), \(\gamma_{ii}(x)=\gamma(x)\bigl(1+\epsilon u_{ii}(x)\bigr)\). We take \(\epsilon=1\), \(b_1(x)=0.02\), \(b_2(x)=0.025x\), \(u_{11}(x)=0.4x-0.1\), and \(u_{22}(x)=-0.05\). All other perturbations are set to zero, so the only strain-specific parameters varying beyond the neutral background are the transmission rates \(\beta_1,\beta_2\) and the co-colonization clearance rates \(\gamma_{11},\gamma_{22}\).
(A) The resulting strain-specific SIS parameters.
(B) The basic reproduction number \(R_0(x)\), which remains above one throughout the domain.
(C) The advection coefficient \(v(x)\) induced by spatial variation in the neutral endemic state.
(D) The effective pairwise coefficients \(\lambda_1^2(x)\) and \(\lambda_2^1(x)\). In this example, \(\lambda_1^2(x)\) remains positive, whereas \(\lambda_2^1(x)\) changes sign in space.
(E,F) Time evolution of the corresponding spatial replicator equation from two different constant initial conditions, \(z(x,0) = 0.1, z(x,0) = 0.9\) showing that the same SIS-derived interaction structure can lead to different long-time outcomes. In F, the black profile at (t=1000) represents a stable spatial coexistence state.}
    \label{fig:sis-derived-replicator}
\end{figure}

\section{Discussion}

This work shows that spatial heterogeneity changes what can be inferred from invasion criteria. In the classical two-species replicator equation, the signs of the pairwise invasion fitnesses completely classify the dynamics \cite{Hofbauer1998,Cressman2014}. If both species can invade when rare, the coexistence equilibrium is stable; if only one can invade, the corresponding species excludes the other; and if neither can invade, the system is bistable. In the spatially heterogeneous replicator equation, the analogous quantities are the principal eigenvalues of the linearized operators around the boundary equilibria. These principal eigenvalues retain the ecological meaning of invasion fitnesses: they measure the initial growth rate of a rare species in an environment set by the resident species. However, our results show that their signs no longer provide a complete classification of the long-term dynamics. This separation between local invasion and longer-term outcome echoes a broader theme in adaptive dynamics: invasion fitness is a growth-when-rare criterion, but it may not by itself determine the subsequent evolutionary or ecological outcome \cite{Metz1992,Metz1996AdaptiveDynamics,Geritz1998}.

The spatial system can nevertheless support stable coexistence states even when one of the two spatial invasion fitnesses is negative. This creates a point of tension with invasion-based coexistence criteria, where invasion growth rates are central because they measure recovery from rarity and, under suitable assumptions, provide a test for persistence \cite{Chesson2000,Chesson2018}. In our setting, a negative principal eigenvalue still has its usual local meaning: the corresponding boundary equilibrium is stable against invasion by the rare species. What changes is the global implication of this fact. We show, through a backward bifurcation that there can be a stable interior state with its own basin of attraction, so failure to invade from rarity does not necessarily rule out coexistence. Thus, spatial heterogeneity separates two questions that are equivalent in the homogeneous two-species replicator equation: whether a species can invade when rare, and whether it can persist as part of a stable coexistence state.

The coexistence mechanism we obtain can be viewed as a form of spatial niche differentiation. In the bifurcation result, \(\lambda_1^2(x)>0\), while \(\lambda_2^1(x)\) varies sufficiently across space and changes sign, so the local competitive balance differs from one region to another. Diffusion and advection then couple these local environments, allowing spatial heterogeneity to affect the global dynamics. Importantly, this effect is not determined only by spatial averages of the interaction coefficients. The bifurcation condition involves weighted averages, with weights given by the principal eigenfunction of the linearized operator and its adjoint. Thus, the relevant quantity is not simply the amount of favorable habitat, but where favorable and unfavorable regions lie relative to dispersal and to the interaction profile of the other species. In this sense, the backward bifurcation provides a concrete mechanism by which spatially varying competitive rankings can generate coexistence \cite{Amarasekare2003,Cantrell1991,Dockery1998,Cosner2014}.

The effects of diffusion and advection reinforce this interpretation. Movement determines how the species experience the heterogeneous interaction landscape, and therefore how local competitive advantages are translated into a global outcome. In the diffusion examples, Figure~\ref{fig:effectsofdiffusion}, increasing \(D\) weakens the backward coexistence branch and eventually eliminates it, as the system approaches a more homogenized regime in which the boundary attractors dominate. This suggests that the coexistence mechanism relies on a decoupling between favorable and unfavorable regions, as too much diffusion averages out the spatial structure that supports the interior state. Advection changes this balance in a less straightforward way. By transporting frequencies preferentially across the domain, advection changes the spatial overlap between species distributions and favorable regions, and can therefore either create or destroy the backward bifurcation.

The backward bifurcation also gives initial conditions a more explicit role. In the same invasion-sign regime, the spatial system may contain both a stable boundary equilibrium and a stable coexistence state. The observed outcome can therefore depend on whether the initial distribution lies in the basin of attraction of the coexistence branch or in that of the exclusion state. A species with negative invasion fitness may fail to invade from rarity, yet persist if introduced above a sufficient threshold or in a favorable spatial configuration. Ecologically, this connects the model to ideas of priority effects and historical contingency in community assembly \cite{Fukami2015}. The final community is not determined only by the signs of the spatial invasion fitnesses, but also by how the system is assembled.

Beyond coexistence of the resident pair, our results also show that stability of a two-species spatial steady state does not have to be linked to resistance to further invasion. Along the stable coexistence branch, the resident community can be dynamically stable even when its mean invasion resistance  \(\overline Q_a\) is negative. In that case, the same spatial structure that supports coexistence between species~1 and species~2 makes the resident state very susceptible to a third species. Thus, stability within the resident pair and resistance to a new rare type are different aspects of the spatial steady state.

The SIS-derived example shows that this phenomenon is not restricted to interaction coefficients chosen directly at the level of the spatial replicator equation. Starting from the spatial SIS coinfection model of \cite{Le2023}, the slow-diffusion reduction leads to a reaction-advection-diffusion equation for strain frequencies, with effective interaction coefficients determined by the underlying epidemiological parameters. In the example considered here, the resulting system lies in a quadrant where the non-spatial prediction would be exclusion of one strain. Nevertheless, a stable spatial steady state with both strains can occur. This suggests that the backward-bifurcation mechanism can also arise in mechanistic biological models, with the spatial interaction terms emerging from the parameters of the underlying system.

There are also limitations to the present analysis. The main bifurcation result relies on the sign assumption \(\lambda_1^2(x)>0\), which allows the effect of the bifurcation parameter to act in the same way across space. This assumption is useful mathematically because it makes the comparison and bifurcation arguments tractable, but it does not cover all possible heterogeneous interaction structures. In particular, sign-changing \(\lambda_1^2(x)\) may generate additional forms of multistability or coexistence that are not explained by the mechanism identified here. Therefore, the results should not be read as a complete classification of spatial replicator dynamics. Rather, they identify one explicit and ecologically interpretable mechanism by which spatial heterogeneity can create stable coexistence outside the regimes predicted by the homogeneous two-species model.

Taken together, these results point to a broader difficulty in spatial ecological models. Once local interactions are coupled through movement in a heterogeneous environment, invasion criteria may remain biologically meaningful without fully determining the long-term dynamics. Coexistence and exclusion can also depend on arrival history, the spatial arrangement of favorable and unfavorable regions, and the way dispersal connects them. The two-species spatial replicator equation provides a useful minimal setting in which to separate these effects. By working with relative frequencies, the model reduces the dynamics to a single scalar equation, while still retaining the influence of spatial heterogeneity and dispersal. It therefore offers a tractable framework for studying how space modifies classical invasion and coexistence criteria.

\section*{Acknowledgements} 
This work was supported by Fundação para a Ciência e Tecnologia (FCT), Portugal, through project Models4Invasion (FCT grant 2022.03060.PTDC).

\section*{Conflict of interest} The authors declare no conflicts of interest.

\paragraph{SI Code.} 
\href{https://github.com/tomasfreire/Backward-bifurcations-in-spatial-replicator-models}{https://github.com/tomasfreire/Backward-bifurcations-in-spatial-replicator-models}

\printbibliography

\clearpage
\setcounter{figure}{0}
\setcounter{table}{0}
\setcounter{section}{0}
\setcounter{theorem}{0}

\renewcommand{\thesection}{S\arabic{section}}
\renewcommand{\thefigure}{S\arabic{figure}}
\renewcommand{\thetable}{S\arabic{table}}
\renewcommand{\thetheorem}{S\arabic{theorem}}

\renewcommand{\theHsection}{S\arabic{section}}
\renewcommand{\theHfigure}{S\arabic{figure}}
\renewcommand{\theHtable}{S\arabic{table}}
\renewcommand{\theHtheorem}{S\arabic{theorem}}

\section*{Supplementary Information (SI Text)}
\begin{large}
    \textbf{Backward bifurcations in spatial replicator models:
when invasion criteria fail to predict coexistence}\\

\noindent Tomás Freire$^{1}$,
Erida Gjini$^{1}$, Sten Madec$^{2}$\\

\noindent $^{1}${Center for Stochastic and Computational Mathematics, Instituto Superior Tecnico, University of Lisbon, Lisbon, Portugal}

\noindent $^{2}${Institut Denis Poisson, University of Tours, Tours, France}\\

\noindent  tomas.freire@tecnico.ulisboa.pt, erida.gjini@tecnico.ulisboa.pt, sten.madec@univ-tours.fr

\end{large}

\section{Useful Theorems and Lemmas}

\begin{theorem}[Theorem 3.7 \cite{Cantrell2003}]

\label{thm:rabinowitzSI}
    Suppose that $L$ and $H$ are operators as described above. If $\mu_0$ is a simple characteristic value of $L$, then $a=\mu_0$ is a bifurcation point for \eqref{eq:352}. Let $\mathcal{S} \subset \mathbb{R} \times Y$ be the set of nontrivial solutions to \eqref{eq:352}. Let $\mathcal{C}$ be the connected component of the set $\mathcal{S} \cup \{(\mu_0,0)\}$. Then, in a neighborhood of $(\mu_0,0)$, $\mathcal{C} = \mathcal{C}^+ \cup \mathcal{C}^-$, where $\mathcal{C}^+ \cap \mathcal{C}^- = \{(\mu_0,0)\}$ and each of $\mathcal{C}^+, \mathcal{C}^-$ satisfies one of the alternatives
    \begin{enumerate}
        \item $\mathcal{C}^+$ (resp. $\mathcal{C}^-$) is unbounded in $\mathbb{R}\times Y$, or
        \item $\mathcal{C}^+$ (resp. $\mathcal{C}^-$) contains $(\mu_1,0)$ where $\mu_1$ is a characteristic value of $L$ and $\mu_1 \neq \mu_0$.
    \end{enumerate}
\end{theorem}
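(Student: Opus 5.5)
The statement is Rabinowitz's global bifurcation theorem in the unilateral form due to Dancer, applied to the fixed-point formulation \eqref{eq:352}. I would prove it by Leray--Schauder degree.

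\emph{Setting and index jump.} On $Y$ set $F(\mu,z):=z-\mu Lz-H(\mu,z)$. Since $L$ is compact and linear, and $H$ is compact with $\|H(\mu,z)\|_{Y}=o(\|z\|_{Y})$ uniformly for $\mu$ in bounded sets, $F$ is a compact perturbation of the identity and the Leray--Schauder degree applies. For $\mu$ not a characteristic value, $I-\mu L$ is invertible, so $z=0$ is an isolated zero of $F(\mu,\cdot)$. Its index is
\[
i(\mu):=\deg\big(I-\mu L,B_r(0),0\big)=(-1)^{\beta(\mu)},
\]
where $\beta(\mu)$ is the sum of the algebraic multiplicities of the characteristic values lying between $0$ and $\mu$ (Leray--Schauder index formula). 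Because $\mu_0$ is simple, $i(\mu_0-\varepsilon)=-i(\mu_0+\varepsilon)$ for small $\varepsilon>0$.

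\emph{Two-sided global alternative.} Suppose the full component $\mathcal C$ is bounded and meets the trivial line only at characteristic values that lead back to $\mu_0$ alone. Whyburn's separation lemma then gives a bounded open set $\mathcal O\supset\mathcal C$ with $\partial\mathcal O\cap\mathcal S=\emptyset$. Moreover $\mathcal O$ meets $\mathbb R\times\{0\}$ only in a small interval around $\mu_0$. By the generalized homotopy invariance of the degree, $\deg(F(\mu,\cdot),\mathcal O_\mu,0)$ is independent of $\mu$, where $\mathcal O_\mu$ is the $\mu$-slice of $\mathcal O$. For $\mu$ large, the slice $\mathcal O_\mu$ is empty, so this degree is $0$. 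For $\mu$ just below and just above $\mu_0$, excision reduces the degree to the index of the trivial solution, namely $i(\mu_0\mp\varepsilon)$. These two values have opposite signs, so they cannot both equal $0$, which is a contradiction. Hence $\mathcal C$ is unbounded or contains some $(\mu_1,0)$ with $\mu_1\neq\mu_0$.

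\emph{Unilateral splitting.} Let $\psi_0$ span $\ker(I-\mu_0L)$ and let $\ell$ be a functional with $\ell(\psi_0)=1$ that vanishes on the range of $I-\mu_0L$. For $\eta\in(0,1)$ define the cones
\[
K_\eta^\pm:=\{z:\ \pm\ell(z)>\eta\|z\|\}.
\]
A Lyapunov--Schmidt estimate shows that near $(\mu_0,0)$ every nontrivial solution lies in $\mathbb R\times(K_\eta^+\cup K_\eta^-)$. Define $\mathcal C^\pm$ as the closure of the part of $\mathcal C$ that leaves $(\mu_0,0)$ through $K_\eta^\pm$, together with its continuation. On each half I would repeat the degree argument, now with the modified map $F_\pm(\mu,z)=F(\mu,z)\mp\varepsilon\psi_0$ and a homotopy $\varepsilon\to0$. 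This is Dancer's trick, and it shows that each half carries half of the index jump. It yields the alternatives for $\mathcal C^+$ and for $\mathcal C^-$ separately.

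\emph{Expected obstacle.} The hardest step is this unilateral step, and in particular the claim $\mathcal C^+\cap\mathcal C^-=\{(\mu_0,0)\}$ globally. Dancer's general theorem carries a third alternative: $\mathcal C^+$ may contain a point $(\mu,z)$ with $z\neq0$ lying in the $\mathcal C^-$ cone region, i.e.\ the two halves can reconnect away from the trivial line. To rule this out here I would use positivity. Near $(\mu_0,0)$ the set $\mathcal C^+$ consists of positive functions, since $\psi_0>0$. Positivity is preserved along $\mathcal C^+$ by the strong maximum principle: a solution with $z\ge0$ and $z\not\equiv0$ is strictly positive, so a branch can leave the positive cone only through the trivial line. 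Similarly, $\mathcal C^-$ consists of negative functions, so the two halves cannot meet away from $(\mu_0,0)$.
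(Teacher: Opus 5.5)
The paper gives no proof of this statement; it quotes it from Cantrell--Cosner. Your Whyburn/Leray--Schauder reconstruction is the argument behind that citation. You are also right that the two-alternative unilateral form needs an extra ingredient. For general compact $L$ and $H=o(\|z\|)$ it is not valid, which is why Dancer's version has a third alternative, and positivity is the right way to remove it here.

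\textbf{The gap: the global degree step.} Your second paragraph does not work as written, for two reasons.
\begin{enumerate}
\item $\deg(F(\mu,\cdot),\mathcal O_\mu,0)$ is \emph{not} independent of $\mu$. Since $(\mu_0,0)\in\mathcal O$ and $\mathcal O$ is bounded, $\mathcal O$ meets the trivial line in a bounded segment, say $(\mu_0-\varepsilon,\mu_0+\varepsilon)\times\{0\}$. The trivial zeros $(\mu_0\pm\varepsilon,0)$ then lie on $\partial\mathcal O$. Generalized homotopy invariance therefore only gives constancy on $(\mu_0-\varepsilon,\mu_0+\varepsilon)$ and on each outer half-line separately.
\item For $\mu$ just above or below $\mu_0$, the slice $\mathcal O_\mu$ contains the small nontrivial solutions of $\mathcal C$ itself. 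Excision cannot reduce the degree there to $i(\mu_0\mp\varepsilon)$.
\end{enumerate}
Your final step never uses the index jump: $0=i(\mu_0-\varepsilon)=\pm1$ would already be a contradiction. Applied to a small ball around a non-characteristic $(\lambda,0)$, the same reasoning would ``prove'' that every trivial point is a bifurcation point, so it cannot be right.

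\textbf{How to repair it.} Compare degrees at the two ends of the trivial segment.
\begin{enumerate}
\item Choose $\varepsilon$ so that $[\mu_0-\varepsilon,\mu_0+\varepsilon]$ contains no other characteristic value. Choose $r>0$ so that $F(\mu,\cdot)$ has no zero in $\overline{B_r}\setminus\{0\}$ for $\mu$ near $\mu_0\pm\varepsilon$; this holds because $I-\mu L$ is invertible there and $H=o(\|z\|)$.
\item Set $d(\mu):=\deg(F(\mu,\cdot),\mathcal O_\mu,0)$. Then $d=0$ outside $[\mu_0-\varepsilon,\mu_0+\varepsilon]$, since $\mathcal O_\mu$ is empty for $|\mu|$ large.
\item The degree on $\mathcal O_\mu\setminus\overline{B_r}$ is constant across $\mu_0\pm\varepsilon$, so it equals its outside value $0$. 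Hence $d=i(\mu_0-\varepsilon)$ just inside the left end and $d=i(\mu_0+\varepsilon)$ just inside the right end.
\item Inside, $d$ is constant on $(\mu_0-\varepsilon,\mu_0+\varepsilon)$, including across $\mu_0$. This is because $\partial\mathcal O\cap\mathcal S=\emptyset$ and the trivial zeros there are interior to $\mathcal O$.
\end{enumerate}
This forces $i(\mu_0-\varepsilon)=i(\mu_0+\varepsilon)$, which contradicts the jump. Equivalently, you can work with $\Psi_r(\mu,z)=(\|z\|^2-r^2,F(\mu,z))$ on $\mathcal O$ and vary $r$.

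\textbf{The unilateral step.} ``Each half carries half of the jump'' is only asserted. That is acceptable if you simply invoke Dancer's theorem with its third alternative.

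\textbf{The positivity step.} Your argument works, but state its two ingredients explicitly.
\begin{enumerate}
\item Nontrivial solutions with $z>0$ on $\overline\Omega$ form a relatively open and closed subset of all nontrivial solutions. This follows from the strong maximum principle and the Hopf lemma for $\nabla\cdot(D\nabla z)+\vec\nu\cdot\nabla z+c(x)z=0$ with $c$ bounded.
\item Positive solutions can accumulate on the trivial line only at a characteristic value with positive eigenfunction, i.e.\ where $\sigma_0(\mathcal L_a)=0$.
\end{enumerate}
Near $(\mu_0,0)$, $\mathcal C^+$ can only return through the $s>0$ half of the local curve. If it leaves the positive cone through some $(\mu_1,0)$ with $\mu_1\neq\mu_0$, alternative~2 already holds. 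Either way the third alternative is excluded.
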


\section{Derivation of the local bifurcation branch for frequency-dependent diffusion}
\label{SI:bifurcationdz}

In the main text, the local bifurcation calculation is carried out under the
assumption that \(D=D(x)\). In this case, the diffusion operator is linear in
\(z\), and the only nonlinear terms in the fixed-point formulation come from
the reaction term. Here we show how the calculation and assumptions change if the diffusion
coefficient also depends on the density, \(D=D(x,z)\).

The steady-state equation is then
\begin{equation}
\label{eq:SI_steady_zdependent_D}
0
=
z(1-z)\Big(a\lambda_1^2(x)-\big(a\lambda_1^2(x)+\lambda_2^1(x)\big)z\Big)
+\vec{\nu}(x)\cdot\nabla z
+\nabla\cdot\!\big(D(x,z)\nabla z\big),
\end{equation}
with homogeneous Neumann boundary conditions. Linearizing at the trivial
state \(z=0\) gives
\begin{equation}
\label{eq:SI_La_zdependent_D}
\mathcal L_a\psi
=
\nabla\cdot\!\big(D(x,0)\nabla\psi\big)
+
\vec\nu(x)\cdot\nabla\psi
+
a\lambda_1^2(x)\psi .
\end{equation}
Indeed, the dependence of \(D\) on \(z\) does not contribute to the first
linearization, because the term involving \(D_z(x,0)\) is multiplied by
\(\nabla z\) and is therefore of second order along the trivial branch.
Thus, a rigorous formulation for \(D=D(x,z)\) requires a space with enough
regularity to control the nonlinear diffusion term as a
\(C^{0,\alpha}(\overline\Omega)\)-valued map. A natural choice, considered in \cite{Cantrell2003}, for example, is
\[
Y=C^{2,\alpha}_N(\overline\Omega)
:=
\{z\in C^{2,\alpha}(\overline\Omega):\partial_{\vec n}z=0
\text{ on }\partial\Omega\}.
\] 
In this setting, \(z\), \(\nabla z\), and \(\Delta z\) are controlled in
Hölder norms. Under the corresponding Schauder assumptions on the coefficients
and the boundary, the shifted elliptic operator satisfies
\[
A^{-1}:C^{0,\alpha}(\overline\Omega)\to C^{2,\alpha}_N(\overline\Omega).
\]

The nonlinear diffusion contribution can be written as
\[
\begin{aligned}
\nabla\cdot\!\big((D(x,z)-D(x,0))\nabla z\big)
={}&
(D(x,z)-D(x,0))\Delta z \\
&+
\big(\nabla_xD(x,z)-\nabla_xD(x,0)\big)\cdot\nabla z \\
&+
D_z(x,z)|\nabla z|^2 .
\end{aligned}
\]
Under suitable regularity assumptions on \(D\), for instance enough Hölder
regularity in \(x\) and enough smoothness in \(z\) to make the nonlinear map
twice differentiable, this term belongs to \(C^{0,\alpha}(\overline\Omega)\).
Moreover, for \(z\) small,
\[
D(x,z)-D(x,0)=D_z(x,0)z+\mathcal O(z^2),
\]
and, under the corresponding mixed regularity assumption,
\[
\nabla_xD(x,z)-\nabla_xD(x,0)
=
\nabla_xD_z(x,0)z+\mathcal O(z^2).
\]
Hence
\[
\left\|
\nabla\cdot\!\big((D(x,z)-D(x,0))\nabla z\big)
\right\|_{C^{0,\alpha}}
\leq
C\|z\|_{C^{2,\alpha}}^2.
\]
After applying \(A^{-1}\), this gives
\[
\left\|
A^{-1}
\nabla\cdot\!\big((D(x,z)-D(x,0))\nabla z\big)
\right\|_{C^{2,\alpha}}
\leq
C\|z\|_{C^{2,\alpha}}^2.
\]
Together with the reaction term,
\[
\frac{\|H(a,z)\|_{C^{2,\alpha}}}{\|z\|_{C^{2,\alpha}}}
\longrightarrow 0
\qquad
\text{as } \|z\|_{C^{2,\alpha}}\to 0.
\]

We now compute the second-order contribution of the frequency-dependent diffusion
term to the branch direction. Let
\[
a=a(s),\qquad z=z(s),
\]
be the local bifurcating branch, with
\[
a(0)=a_0,\qquad z(0)=0,\qquad z_s(0)=\psi_0,
\qquad z_{ss}(0)=2\phi_s(0).
\]
Substituting this parametrization into \eqref{eq:SI_steady_zdependent_D} and
differentiating with respect to \(s\) gives
\begin{equation}
\label{eq:SI_first_s_derivative_zdependent_D}
\nabla \cdot\!\Big(D_z(x,z)z_s\nabla z\Big)
+
\nabla \cdot\!\Big(D(x,z)\nabla z_s\Big)
+
\vec\nu\cdot\nabla z_s
+
(zg)_s
=0 .
\end{equation}
Differentiating once more yields
\begin{equation}
\label{eq:SI_second_s_derivative_zdependent_D}
\begin{aligned}
&
\nabla \cdot\!\Big(D_{zz}(x,z)z_s^2\nabla z
+
D_z(x,z)z_{ss}\nabla z\Big)
+
2\nabla \cdot\!\Big(D_z(x,z)z_s\nabla z_s\Big) \\
&\qquad
+
\nabla \cdot\!\Big(D(x,z)\nabla z_{ss}\Big)
+
\vec\nu\cdot\nabla z_{ss}
+
(zg)_{ss}
=0 .
\end{aligned}
\end{equation}
Setting \(s=0\), using \(z(0)=0\), \(z_s(0)=\psi_0\), and
\(z_{ss}(0)=2\phi_s(0)\), gives
\begin{equation}
\label{eq:SI_second_s_zero_zdependent_D}
\begin{aligned}
&
2\nabla \cdot\!\Big(D_z(x,0)\psi_0\nabla\psi_0\Big)
+
\nabla \cdot\!\Big(D(x,0)\nabla z_{ss}(0)\Big)
+
\vec\nu\cdot\nabla z_{ss}(0) \\
&\qquad
+
a_0\lambda_1^2 z_{ss}(0)
+
2a_s(0)\lambda_1^2\psi_0
-
\left(4a_0\lambda_1^2+2\lambda_2^1\right)\psi_0^2
=0 .
\end{aligned}
\end{equation}

Let \(\psi_0^*>0\) be the corresponding principal eigenfunction of the
adjoint operator associated with \(\mathcal L_{a_0}\). Multiplying
\eqref{eq:SI_second_s_zero_zdependent_D} by \(\psi_0^*\), integrating over
\(\Omega\), and using the adjoint equation to remove the terms involving
\(z_{ss}(0)\), we obtain
\begin{equation}
\label{eq:SI_as_identity_zdependent_D}
2a_s(0)
\int_\Omega \lambda_1^2\psi_0\psi_0^*\,dx
=
2\int_\Omega
D_z(x,0)\psi_0\nabla\psi_0\cdot\nabla\psi_0^*\,dx
+
\int_\Omega
\left(4a_0\lambda_1^2+2\lambda_2^1\right)
\psi_0^2\psi_0^*\,dx .
\end{equation}
Therefore
\begin{equation}
\label{eq:SI_as_formula_zdependent_D}
a_s(0)
=
\frac{
\displaystyle
\int_\Omega
D_z(x,0)\psi_0\nabla\psi_0\cdot\nabla\psi_0^*\,dx
+
\int_\Omega
\left(2a_0\lambda_1^2+\lambda_2^1\right)
\psi_0^2\psi_0^*\,dx
}{
\displaystyle
\int_\Omega
\lambda_1^2\psi_0\psi_0^*\,dx
}.
\end{equation}
Thus, the density dependence of $D$ appears in the final formula and may change the branch direction.
\end{document}